\newcommand{\beq}{\begin{equation}}
\newcommand{\eeq}{\end{equation}}
\newcommand{\bea}{\begin{eqnarray*}}
\newcommand{\eea}{\end{eqnarray*}}
\newtheorem{lemma}{Lemma}[section]
\newtheorem{theorem}[lemma]{Theorem}
\newtheorem{corollary}[lemma]{Corollary}
\newtheorem{observation}[lemma]{Observation}
\theoremstyle{definition}
\newtheorem{definition}[lemma]{Definition}
\newcommand{\be}{\begin{equation}}
\newcommand{\ee}{\end{equation}}
\newcommand{\bx}{\begin{bmatrix}}
\newcommand{\ex}{\end{bmatrix}}
\newtheorem{defn}{Definition}[section]
\newtheorem{cor}[defn]{Corollary}
\newtheorem{thm}[defn]{Theorem}
\theoremstyle{definition}
 \newenvironment{cit}
{
    \begin{list}{- \ }{}
        \setlength{\topsep}{0pt}
        \setlength{\parskip}{0pt}
        \setlength{\partopsep}{0pt}
        \setlength{\parsep}{0pt}         
        \setlength{\itemsep}{0pt} 
}
{
    \end{list} 
}
 \newenvironment{cem}
{
    \begin{enumerate}
        \setlength{\topsep}{0pt}
        \setlength{\parskip}{0pt}
        \setlength{\partopsep}{0pt}
        \setlength{\parsep}{0pt}         
        \setlength{\itemsep}{0pt} 
}
{
    \end{enumerate} 
}
\begin{document}

\title{On the hardness of recognizing triangular line graphs}
\author{Pranav Anand \\
                Department of Linguistics\\
                University of California--Santa Cruz\\
                \texttt{panand@ucsc.edu}
                \and
                Henry Escuadro\\
                Department of Mathematics\\
                Juniata College\\
                \texttt{escuadro@juniata.edu}
                \and
                Ralucca Gera\\
                Department of Applied Mathematics\\
                Naval Postgraduate School\\
                \texttt{rgera@nps.edu}
                \and
                Stephen G. Hartke\thanks{This author is supported in part by a Nebraska EPSCoR First Award and NSF grant DMS-0914815.}\\
                Department of Mathematics\\
                University of Nebraska--Lincoln\\
                \texttt{hartke@math.unl.edu}
                \and
                Derrick Stolee\thanks{This author is supported in part by NSF grants DMS-0914815 and CCF-0830730.}\\
                Department of Computer Science\\
                Department of Mathematics\\
                University of Nebraska--Lincoln\\
                \texttt{dstolee@cse.unl.edu}
        }
\maketitle
\begin{abstract}
	Given a graph $G$, its triangular line graph
		is the graph $T(G)$ with vertex set 
		consisting of the edges of $G$
		and adjacencies between edges 
		that are incident in $G$
		as well as being within a common triangle.
	Graphs with a representation as the triangular line graph of some graph $G$
		are \emph{triangular line graphs}, which have been studied under many names
		including anti-Gallai graphs, 2-in-3 graphs, and link graphs.
	While closely related to line graphs, triangular line graphs
		have been difficult to understand and characterize.
	Van Bang Le asked if recognizing triangular line graphs has an efficient
		algorithm or is computationally complex.
	We answer this question by proving that 
		the complexity of recognizing 
		triangular line graphs is 
		NP-complete 
		via a reduction from 3-SAT.
\end{abstract}





\bigskip

\def\calG{{\mathcal G}}

\clearpage

\section{Introduction}

The line graph operator, $L$, is well studied.
Recognition and characterization have been determined for $L$ 
	(see Lehot~\cite{lehot74} and Roussopoulos~\cite{roussopoulos73}), 
	but these problems for related operators are less understood. 
The triangular line graph operator, $T$, 
	defined by Jarrett~\cite{OnIteratedTriangularLineGraphs} 
	is one such operator.
	
\begin{definition}[Triangular Line Graph]
	Given a graph $G$ with edge set $E(G)$, 
		the \emph{triangular line graph} of $G$, 
		denoted $T(G)$,
		is the graph with vertex set $E(G)$
		and an edge between the edges $e_1$ and $e_2$ if 
		and only if
	\begin{cit}
		\item $e_1$ and $e_2$ are incident at a common vertex in $G$, and
		\item there is an edge $e_3 \in E(G)$ so that $e_1e_2e_3$ is a triangle in $G$.
	\end{cit}
	
	A graph $H$ is a \emph{triangular line graph} if there exists a graph $G$
		so that $H \cong T(G)$.
\end{definition}
	
The second restriction is the only difference from the line graph operator, 
	but causes great difference in behavior.
For example, the class of triangular line graphs is not closed under
	taking induced subgraphs, unlike line graphs.

The triangular line graph is studied under many different names.
For integers $k$ and $\ell$, the $k$-in-$\ell$ operator $\Phi_{k,\ell}$ maps
	$k$-cliques to vertices which are adjacent if and only if
	they appear in a common $\ell$-clique \cite{GraphDynamics}.
For $k = 2$ and $\ell = 3$,  the $2$-in-$3$ operator $\Phi_{2,3}$ is equivalent
	to the triangular line graph operator.
For $\ell = k+1$, the $k$-in-$\ell$ operator 
	is identical to the $k$-anti-Gallai operator \cite{GraphDynamics}.
The name comes from the $k$-Gallai graph operator, where $k$-cliques are mapped to vertices 
	and adjacencies exist when the $k$-cliques intersect at $k-1$ vertices but 
	are not within a $k+1$-clique.
These operators were originally
	defined by Gallai~\cite{gallai} in connection with comparability graphs.
In addition, $T(G)$ is a special case of the $H$-line graph (when $H = K_3$), 
	defined by Chartrand, Gavlas, and Schultz~\cite{ConvergentSequencesOfIteratedHLineGraphs} 
	as a subgraph of $L(G)$ which preserves edge adjacency for edges in an induced subgraph 
	isomorphic to $H$. 
	
Prior research on triangular line graphs
 	has focused primarily on the properties of iterations of the operator 
	(see Jarret~\cite{OnIteratedTriangularLineGraphs}, 
	Dorrough~\cite{ConvergenceOfSequencesOfIteratedTriangularLineGraphs}, 
	and Anand, Escuadro, Gera, Martell~\cite{aegm1}, ~\cite{aegm2}), 
	their relation to perfect graphs \cite{GallaiGraphsAndAntiGallaiGraphs}, 
	or the forbidden subgraphs characterization for $H$-free triangular line graphs 
		\cite{GallaiAndAntiGallaiGraphsOfAGraph}.   

In \cite{GallaiGraphsAndAntiGallaiGraphs}, 
	Le provides a characterization for  $k$-Gallai graphs and $k$-anti-Gallai graphs,
	inspired by the characterization of line graphs in \cite{krausz43}. 
With $k=2$ this yields a characterization for triangular line graphs:

\begin{theorem}[Van Bang Le, \cite{GallaiGraphsAndAntiGallaiGraphs}]
	A graph $G$ is a triangular line graph if and only if there is a family of induced subgraphs 
		$\mathcal{F} = \{G_i: i \in I\}$ of $G$ such that:
		
	\begin{cem} 
	
		\item Every vertex in $G$ appears in exactly two members of $\mathcal{F}$.
	
		\item Every edge in $G$ appears in exactly one member of $\mathcal{F}$.
		
		\item $|G_i \cap G_j| \le 1$, for every $ i \neq j$. 
		
		\item For any distinct $i,j,k \in I$, if $\{v_i\} = G_i \cap G_k$ and $\{v_j\} = G_j \cap G_k$ and $v_i \ne v_j$, then $(v_i, v_j) \in E(G)$  if and only if $G_i \cap G_j \ne \emptyset$. 
	
	\end{cem}
\end{theorem}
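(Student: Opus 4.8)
\noindent The plan is to set up a Krausz-type correspondence: the members of $\mathcal{F}$ play the role of the vertices of a root graph $R$, while the vertices of $G$ play the role of the edges of $R$. Under this correspondence, condition~1 records that each edge of $R$ has two endpoints, condition~3 ensures $R$ is simple, condition~2 controls which pairs of vertices of $G$ may be adjacent, and condition~4 encodes exactly the triangle requirement in the definition of $T$. Both directions then reduce to checking that adjacency in $G$ matches adjacency in $T(R)$.

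For the forward direction, suppose $G \cong T(R)$ and identify $V(G)$ with $E(R)$. For each $v \in V(R)$, let $G_v$ be the subgraph of $G$ induced by the edges of $R$ incident with $v$, and set $\mathcal{F} = \{G_v : v \in V(R)\}$ with index set $I = V(R)$. I would then verify the four conditions by direct inspection: an edge $e = uv$ of $R$ lies in exactly the two members $G_u$ and $G_v$ (condition~1); two edges of $R$ adjacent in $T(R)$ meet at a single common vertex $w$, so their edge in $G$ appears only in $G_w$ (condition~2); $G_u \cap G_v$ consists only of the edge $uv$ when it exists (condition~3); and for indices $i,j,k$ with $\{v_i\} = G_i \cap G_k$ and $\{v_j\} = G_j \cap G_k$, the edges $v_i$ and $v_j$ share the vertex $k$ and lie in a common triangle of $R$ precisely when $ij \in E(R)$, i.e.\ precisely when $G_i \cap G_j \ne \emptyset$ (condition~4).

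For the reverse direction, I would start from a family $\mathcal{F}$ satisfying the four conditions and build $R$ on vertex set $I$ by adding, for each vertex $x$ of $G$, an edge joining the two indices $i,j$ with $x \in G_i \cap G_j$ guaranteed by condition~1. Condition~3 makes this a well-defined bijection between $V(G)$ and $E(R)$, since distinct vertices of $G$ cannot share both of their indices. The crux---and the step I expect to be the main obstacle---is verifying that this bijection is an isomorphism $G \to T(R)$, which I would split into two cases. If the edges of $R$ corresponding to $x$ and $y$ are incident, say $x \in G_i \cap G_k$ and $y \in G_j \cap G_k$, then condition~3 forces $i \ne j$, and condition~4 gives that $x$ and $y$ are adjacent in $G$ if and only if $G_i \cap G_j \ne \emptyset$, which is exactly the condition that $x$ and $y$ lie in a common triangle of $R$, hence exactly that they are adjacent in $T(R)$. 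If instead the two edges share no vertex of $R$, then $x$ and $y$ lie in no common member of $\mathcal{F}$; were they adjacent in $G$, condition~2 would place the edge $xy$ in a unique member $G_m$, forcing $m$ to be a shared index and contradicting non-incidence, so $x$ and $y$ are non-adjacent in both $G$ and $T(R)$. Assembling the two cases yields $G \cong T(R)$, completing the characterization.
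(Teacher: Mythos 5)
The paper does not actually prove this statement---it is quoted from Le's work---but your argument is correct and complete, and it follows exactly the correspondence the paper sketches in the sentence after the theorem (each member of $\mathcal{F}$ is the induced subgraph on the edges of the preimage incident to a fixed vertex). Both directions check out, including the two points most likely to cause trouble: condition~3 ruling out parallel edges in the reconstructed root graph $R$ (so that the only triangle through two incident edges $ik$, $jk$ is $ijk$, making condition~4 equivalent to the triangle clause in the definition of $T$), and condition~2 supplying the non-adjacency half of the isomorphism when the corresponding edges of $R$ are vertex-disjoint.
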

	
This characterization arises when considering a \emph{triangular line graph preimage} $H$ of a graph $G$ so that
	$T(H) \cong G$.
For each vertex $v$ in $V(H)$, the edges incident to $v$ induce one of the subgraphs
	in the family $\mathcal{F}$.
While this is a complete characterization, 
	it is unsatisfying as it is cumbersome and lacks an efficient algorithmic interpretation.
In \cite{GallaiGraphsAndAntiGallaiGraphs}, Le states that the complexity of recognizing this class 
	is unknown.

We prove that recognition of triangular line graphs is NP-complete.  
We do so via a reduction from the canonical NP-complete problem $3$-SAT 
	(see \cite{GareyJohnson}). 
The logic of a conjunctive normal formula $\phi$
	with clauses of size three
	is encoded to a graph $G_\phi$
	whose triangular line graph preimage (if it exists)
	describes a satisfying assignment of $\phi$.
This result is in stark contrast to the polynomial-time algorithm of Roussopoulos~\cite{roussopoulos73}
	that recognizes line graphs and constructs the unique line graph preimage.
Moreover, it implies 
	simpler characterizations 
	of triangular line graphs
	that lead to polynomial-time algorithms
	 are extremely unlikely.

Our reduction relies on the fact that certain triangular line graphs have 
precisely two preimage isomorphism classes, thus providing us with a representation of binary values.
We begin with a discussion of these classes, and proceed to show how these classes enforce similar requirements
for the preimages of graphs containing these components as subgraphs.
From these two observations, we construct gadgets that encode 
	logic of 3-CNF formulas, starting 
	with binary values for each variable.
These gadgets are combined in the final construction that requires each 
	clause to be satisfied if and only if a preimage exists.


\section{Triangle-Induced Subgraph Closure}

An important property of line graphs is that any induced subgraph is also a line graph.
Triangular line graphs are not closed under induced subgraphs, but do satisfy a similar property.

\begin{definition}
	A subgraph $H_1$ is a \emph{triangle-induced subgraph} of $H_2$, denoted $H_1 \lhd H_2$,
		if 
	
	\begin{cem}
		\item $H_1$ is an induced subgraph of $H_2$ and
		\item If there is an edge $uv$ in $H_1$ and a triangle $uvw$ in $H_2$,
				then the vertex $w$ is in $H_1$.
	\end{cem}
\end{definition}

The interesting property of this relation is that
	any triangle-induced subgraph of a triangular line graph 
	is also a triangular line graph.

\begin{lemma}\label{lma:TriangleInducedClosure}
	Let $H_1, H_2$ be triangular line graphs, 
		with triangular line graph preimages and edge-to-vertex bijections given by 
	\[\mathcal{G}_j = \{ (G,f) : T(G) \cong H_j\text{ via the bijection } f : E(G) \to V(H_j) \}, 
		\text{ for }j \in \{1,2\}.\]	
	If $H_1 \lhd H_2$, then for any preimage $(G_2,f_2) \in \mathcal{G}_2$, there exists
		a preimage $(G_1,f_1) \in \mathcal{G}_1$ so that
		$G_1$ is isomorphic to a subgraph of $G_2$ 
		using the edge map $f_2^{-1} \circ f_1 : E(G_1) \to E(G_2)$.
\end{lemma}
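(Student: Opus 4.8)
The plan is to build $(G_1,f_1)$ directly inside $(G_2,f_2)$ by keeping exactly those edges of $G_2$ that correspond to the vertices of $H_1$. First I would set $S = f_2^{-1}(V(H_1)) \subseteq E(G_2)$ and let $G_1$ be the subgraph of $G_2$ whose edge set is $S$, with vertex set the endpoints of these edges. Define $f_1$ to be the restriction of $f_2$ to $S$; since $f_2$ is a bijection $E(G_2) \to V(H_2)$ and $H_1$ is an induced subgraph of $H_2$ (so $V(H_1) \subseteq V(H_2)$), this restriction is a bijection $f_1 : E(G_1) \to V(H_1)$. With this choice the composite edge map $f_2^{-1} \circ f_1$ is simply the inclusion $S \hookrightarrow E(G_2)$, so $G_1$ is literally a subgraph of $G_2$ realized by that map; the only real content left is to verify that $(G_1,f_1) \in \mathcal{G}_1$, that is, that $T(G_1) \cong H_1$ via $f_1$.

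Because $H_1$ is an \emph{induced} subgraph of $H_2$, for any $e_1,e_2 \in S$ the vertices $f_1(e_1),f_1(e_2)$ are adjacent in $H_1$ if and only if they are adjacent in $H_2 = T(G_2)$, which by definition happens exactly when $e_1$ and $e_2$ are incident and lie in a common triangle of $G_2$. Thus verifying $T(G_1)\cong H_1$ reduces to showing that, for $e_1,e_2 \in S$, the edges $e_1,e_2$ are adjacent in $T(G_1)$ if and only if they are adjacent in $T(G_2)$. The forward implication is immediate: any triangle of $G_1$ is a triangle of $G_2$ because $G_1$ is a subgraph, and incidence is inherited, so adjacency in $T(G_1)$ forces adjacency in $T(G_2)$.

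The backward implication is where the triangle-induced hypothesis does the work, and I expect this to be the crux. Suppose $e_1,e_2 \in S$ are adjacent in $T(G_2)$, witnessed by a triangle $e_1e_2e_3$ of $G_2$. The three edges of any triangle of $G_2$ are pairwise incident and share that triangle, so $f_2(e_1),f_2(e_2),f_2(e_3)$ are pairwise adjacent in $T(G_2)=H_2$; that is, they form a triangle in $H_2$. Now $f_1(e_1)f_1(e_2)$ is an edge of $H_1$ (from the previous paragraph), and $f_2(e_1)f_2(e_2)f_2(e_3)$ is a triangle of $H_2$ on that edge, so the defining property of $H_1 \lhd H_2$ forces $f_2(e_3) \in V(H_1)$, whence $e_3 \in f_2^{-1}(V(H_1)) = S$. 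Therefore the whole triangle $e_1e_2e_3$ lies in $G_1$, so $e_1,e_2$ are incident and share a triangle in $G_1$, hence adjacent in $T(G_1)$. This establishes the equivalence of adjacencies and so $T(G_1)\cong H_1$ via $f_1$, completing the construction. Everything outside the backward direction is bookkeeping about restricting the bijection $f_2$; the single substantive step is the passage from a triangle in $G_2$ to a triangle in $H_2$ followed by the triangle-induced closure that pulls $e_3$ back into $S$.
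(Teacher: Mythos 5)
Your proposal is correct and follows essentially the same route as the paper: take $G_1$ to be the subgraph of $G_2$ on the edge set $f_2^{-1}(V(H_1))$, restrict $f_2$ to get $f_1$, and use the triangle-induced hypothesis to pull the third edge $e_3$ of any witnessing triangle back into $G_1$. Your write-up is in fact slightly more explicit than the paper's about why the induced-subgraph condition gives the forward direction of the adjacency equivalence, but the substance is identical.
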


\begin{proof}
	Let $(G_2,f_2) \in \mathcal{G}_2$ be a preimage for a triangular line graph $H_2$.
	{Set $G_1$ to be the graph induced by edge set 
		$F = f_2^{-1}(V(H_1))$}.
	It is clear that the adjacencies in $T(G_1) \subseteq H_2$ 
		are also adjacencies in {$T(G_2) = H_2$},
		as triangles in $G_1$ are triangles in $G_2$.
	Let $e_1$ and $e_2$ be edges in $G_1$ that are adjacent vertices in $H_1$.
	There is an edge $e_3$ in $G_2$ that completes the triangle with $e_1$ and $e_2$.
	Since $e_1$ and $e_2$ are vertices in $H_1$, 
		and $H_1$ is a triangle-induced subgraph of $H_2$,
		then $e_3$ must also be in $H_1$, 
		as these {vertices} form a triangle in $H_2$.
	Hence, $e_3$ is in $G_1$ and $e_1$ and $e_2$ are adjacent in $T(G_1)$.
	To complete the proof, define $f_1 = f_2|_{E(G_1)}$.
\end{proof}

It follows from this lemma that the class of triangular line graphs 
	is closed under taking triangle-induced subgraphs.

\begin{corollary}
	Let $G$ be any graph and $H = T(G)$ its triangular line graph.
	Any triangle-induced subgraph $H' \lhd H$
		is also a triangular line graph
		with preimage $G'$ that is a subgraph of $G$.
\end{corollary}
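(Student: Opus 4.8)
The plan is to specialize Lemma~\ref{lma:TriangleInducedClosure} to the case $H_2 = H = T(G)$ and $H_1 = H'$. First I would observe that $H = T(G)$ comes with a natural edge-to-vertex bijection $f \colon E(G) \to V(H)$, so the pair $(G, f)$ is a member of the preimage family $\mathcal{G}_2$; in particular $\mathcal{G}_2 \neq \emptyset$, and we may take this $(G,f)$ as the preimage $(G_2, f_2)$ required by the lemma. The hypothesis $H' \lhd H$ supplies the triangle-induced relation needed to invoke it.

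The one point that needs care is that the statement of Lemma~\ref{lma:TriangleInducedClosure} presupposes that $H_1 = H'$ is itself a triangular line graph (so that the family $\mathcal{G}_1$ is nonempty), whereas that is exactly part of what the corollary asserts. I would therefore invoke the \emph{construction} in the proof of the lemma rather than treating the lemma as a black box: set $G'$ to be the subgraph of $G$ induced by the edge set $F = f^{-1}(V(H'))$. This $G'$ is by definition a subgraph of $G$, and the argument in the lemma's proof shows directly that $T(G') \cong H'$ via $f|_F$.

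The heart of the verification, and the main (though mild) obstacle, is checking that restricting to the edge set $F$ neither gains nor loses adjacencies. Adjacencies in $T(G')$ are automatically adjacencies in $H = T(G)$, since a triangle in $G'$ is a triangle in $G$; this gives one inclusion. The reverse inclusion is where the triangle-induced hypothesis $H' \lhd H$ is essential: if $e_1, e_2 \in F$ are adjacent in $H'$, then some third edge $e_3$ of $G$ completes a triangle $e_1 e_2 e_3$ in $G$, and since $e_1, e_2$ are vertices of $H'$ while $e_1 e_2 e_3$ forms a triangle in $H$, the triangle-induced condition forces $e_3 \in V(H')$, hence $e_3 \in F$ and $e_1, e_2$ are adjacent in $T(G')$. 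This is precisely the computation already carried out in Lemma~\ref{lma:TriangleInducedClosure}.

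Finally, having produced a graph $G'$ with $T(G') \cong H'$, I would conclude that $H'$ is a triangular line graph whose preimage $G'$ is a subgraph of $G$, which is the assertion of the corollary. No further case analysis is required: the corollary is essentially a reinterpretation of the lemma in the special case where $H_2$ is taken to be a genuine triangular line graph $T(G)$ and the preimage $G_2$ is $G$ itself.
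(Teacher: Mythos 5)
Your proposal is correct and follows essentially the same route as the paper, which gives no separate proof and derives the corollary directly from Lemma~\ref{lma:TriangleInducedClosure} by taking $H_2 = T(G)$ with the canonical preimage $(G,f)$. Your observation that the lemma's statement formally presupposes $H_1$ is a triangular line graph, so one must appeal to the construction $G' = G[f^{-1}(V(H'))]$ inside the lemma's proof rather than to the lemma as a black box, is a fair and careful reading, but it does not change the substance of the argument.
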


Given a graph $G$ and its line graph $H = T(G)$, we use the notation
	$T^{-1}(H')$ on any triangle-induced subgraph $H' \lhd H$
	to denote the edges in $G$ that map to the vertices of $H'$.
We proceed to define gadgets which become triangle-induced subgraphs of the final graph.
By {putting} restrictions on the class of preimages of each gadget,
	we prove that the larger graph has a preimage if and only if
	each gadget satisfies their individual preimage restrictions.
Put together, these gadgets encode the logic of a satisfied 3-CNF formula.


\section{Cycles, Wheels, and Suns}\label{subsection:7sun}

This section describes a class of triangular line graphs with multiple preimages.
The smallest unit of these graphs is the \emph{bowtie}: 
	the graph consisting of	two triangles intersecting at a single vertex, 
	called the \emph{center}.
The bowtie is the triangular line graph of the \emph{double triangle}: 
	{$K_4-e$}. 
While this is the only preimage (up to isomorphism), there are two different
	classes of labeled preimages, depending on the incidences of 
	edges between the triangles.

\begin{figure}[h]
\centering
	\mbox{
		\subfigure[The bowtie.]{
			\includegraphics[height=0.95in]{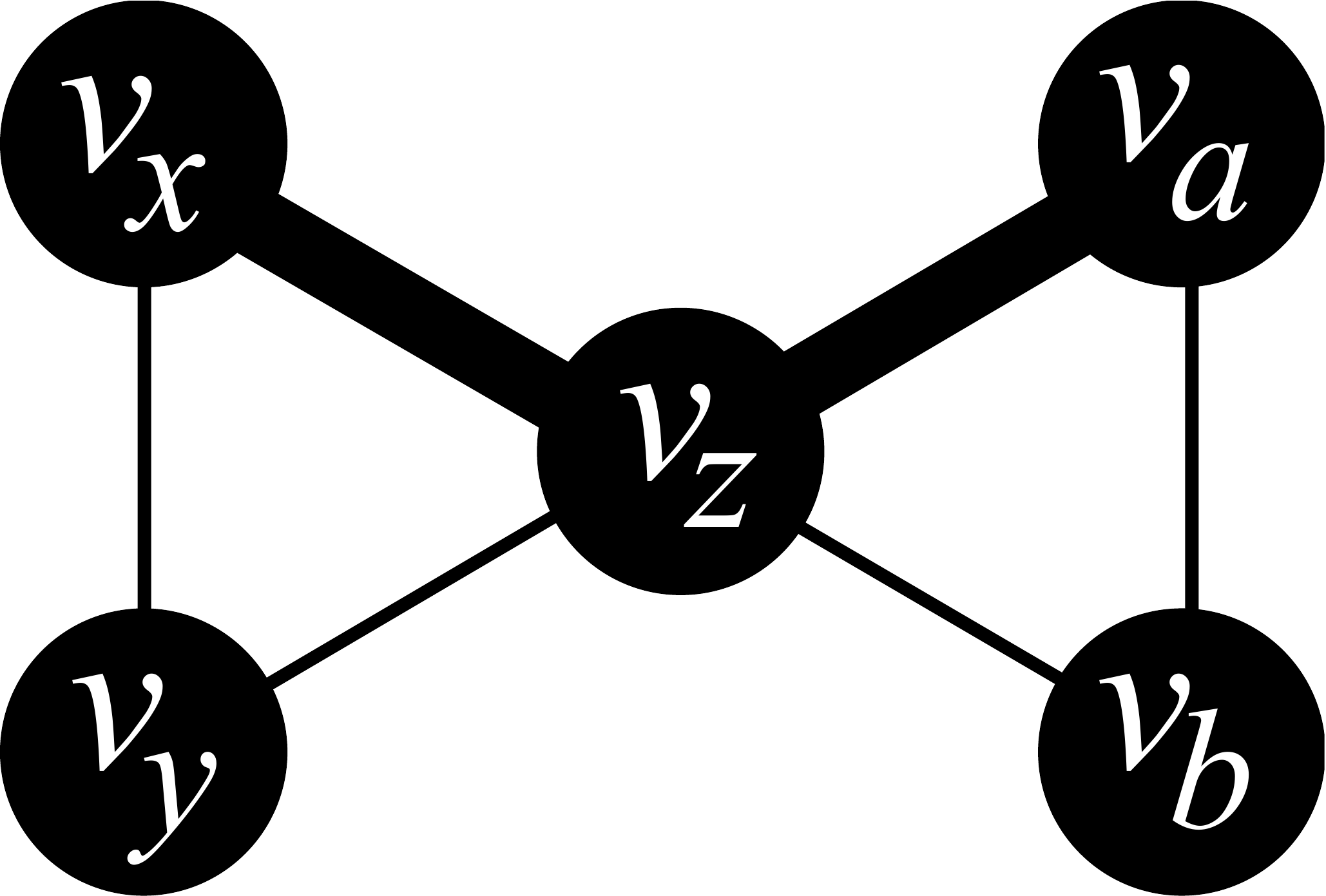}
		}
		\qquad
		\subfigure[First preimage of the bowtie.]{
			\includegraphics[height=0.95in]{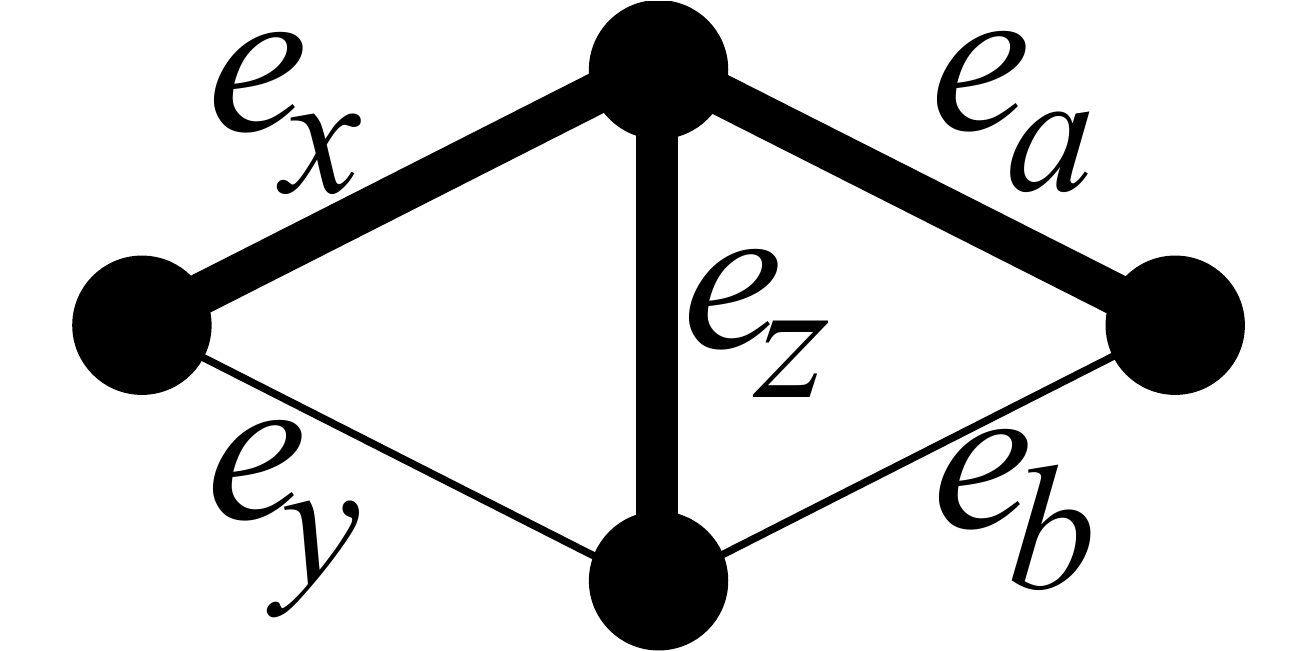}
		}
		\quad
		\subfigure[Second preimage of the bowtie.]{
			\includegraphics[height=0.95in]{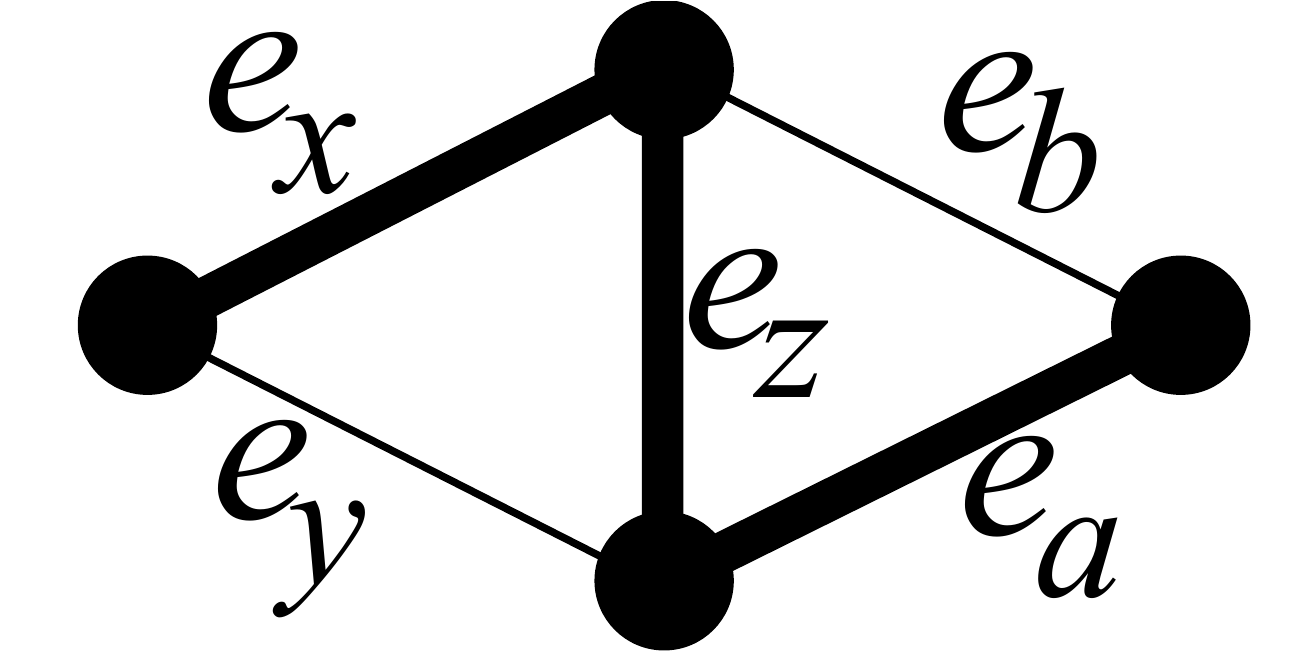}
		}
	}
	\caption{\label{fig:bowtiedbltri}The bowtie and its preimages with three highlighted edges.}
\end{figure}

Consider two nonadjacent vertices in the bowtie.
They are both adjacent to the center, 
	so their edges in the preimage 
	are adjacent to the center edge.
The two preimages differ depending on whether or not these 
	edges are incident to each other.
See Figure \ref{fig:bowtiedbltri} which shows these preimages.

A \emph{triangle trail} is a sequence of triangles where consecutive triangles
	intersect at an edge.
There are two special triangle trails of length $k$, for $k \geq 3$.
The first is the $k$-fan, where all triangles have a common vertex, 
	but are otherwise
	disjoint other than the edge between consecutive triangles.
The second is the $k$-triangle strip, where triangles of distance
	two intersect at a vertex, but triangles of larger distance are disjoint.
For $k=3$, these definitions coincide, and Figure \ref{fig:fans} 
	demonstrates the difference at $k = 4$.
	
\begin{figure}[ht]
	\centering
	\mbox{
		\subfigure[\label{fig:threefan}Three consecutive triangles make a fan.]{
			\includegraphics[height=1in]{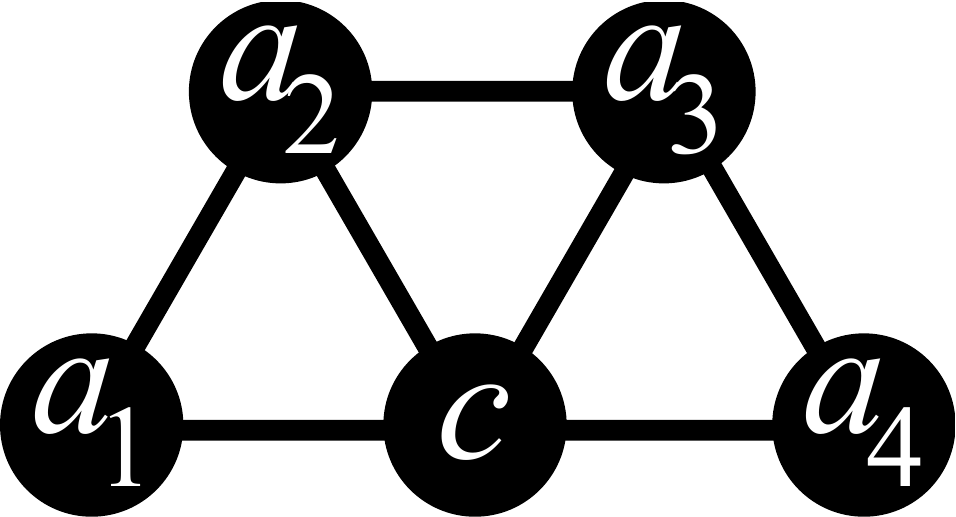}
			}
		\quad
		\subfigure[\label{fig:fourfan}Four triangles in a fan.]{
			\includegraphics[height=1.25in]{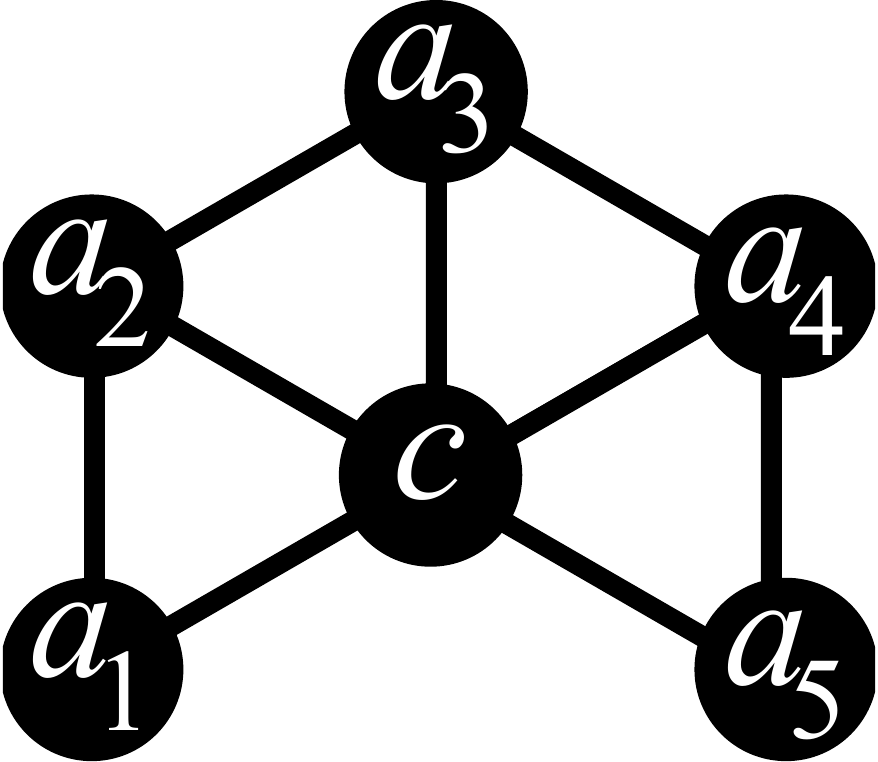}
			}
		\quad
		\subfigure[\label{fig:fourstrip}Four triangles in a strip]{
			\includegraphics[height=1in]{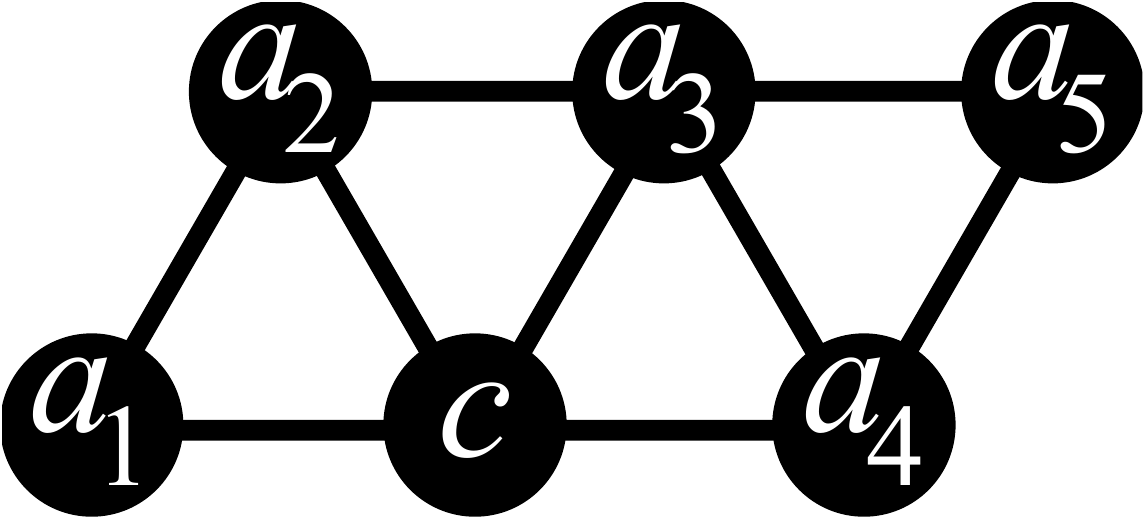}
			}
		
	}
	\caption{\label{fig:fans}Fans and triangle strips.}
\end{figure}

Notice that the $k$-triangle strip and $k$-fan have isomorphic triangular line graphs
	composed of a sequence of $k-1$ bowties where consecutive bowties intersect at a triangle
	and are otherwise disjoint.
By closing these sequences into cyclic orders, we form the base of our gadget constructions.

\begin{definition} Let $k \geq 4$ be an integer.
	\begin{cit}
		\item The \emph{$k$-wheel}, $W_k$, is the graph formed by a $k$-cycle and a dominating vertex, 
			called the \emph{center} of the wheel.
			Edges incident to the center are called \emph{spokes}.
			
		\item The \emph{squared $k$-cycle}, $C_{k}^2$, is the graph given by a $k$-cycle 
				with additional edges added for vertex pairs at distance two in the $k$-cycle.
				
		\item The \emph{$k$-sun}, $S_k$, is the graph given by a $k$-cycle with
				a new vertex for each edge, adjacent to the endpoints.
	\end{cit}
\end{definition}

\begin{lemma}\label{lma:7sun}
	For $k \geq 7$, the triangular line graphs of the $k$-wheel $W_k$ and squared cycle $C_k^2$
		are both isomorphic to the $k$-sun $S_k$.
	The 7-wheel $W_7$ and squared cycle $C_7^2$ are the only two preimages
		of the $7$-sun $S_7$ up to isomorphism.
\end{lemma}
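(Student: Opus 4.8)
The plan is to prove the two assertions separately: first verify that both $T(W_k)$ and $T(C_k^2)$ are $S_k$, and then establish that $W_7$ and $C_7^2$ are the only preimages of $S_7$.

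For the forward computations I would begin by pinning down the edges and triangles of each preimage. In $W_k$ (any $k\ge 4$ suffices, in particular $k\ge 7$) the edges are the $k$ spokes and the $k$ rim edges, and since the rim is chordless the only triangles are the $k$ copies of (center, two consecutive rim vertices). Computing $T(W_k)$, two spokes are adjacent exactly when they are consecutive, so the spokes induce a $k$-cycle; each rim edge lies in a single triangle and is therefore adjacent to precisely the two consecutive spokes bounding that triangle, becoming a degree-two tip that completes a triangle with the corresponding spoke-edge. This is exactly $S_k$. For $C_k^2$ the edges split into $k$ distance-one edges and $k$ distance-two edges; here I would use $k\ge 7$ to argue that the only triangles are the consecutive triples $\{u_i,u_{i+1},u_{i+2}\}$, since for $k\le 6$ the distance-two chords wrap around and create additional triangles, so the hypothesis $k\ge 7$ is used exactly at this point. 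The same bookkeeping then shows the distance-one edges induce a $k$-cycle and each distance-two edge is a degree-two tip adjacent to exactly two consecutive distance-one edges, giving $S_k$ once more.

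For the uniqueness part, fix $G$ with $T(G)\cong S_7$ and write $\alpha_i,\beta_i$ ($i\in\mathbb{Z}_7$) for the edges of $G$ mapping to the degree-four inner vertices $a_i$ and the degree-two tip vertices $b_i$ of $S_7$. I would first rule out any $K_4$ in $G$: a $K_4$ forces the octahedron $T(K_4)=K_{2,2,2}$ to sit inside $S_7$, but every vertex of $S_7$ lies in at most two triangles whereas each octahedron vertex lies in four, a contradiction. With no $K_4$, every triangle of $T(G)$ is the image of a genuine triangle of $G$, and the triangles of $G$ are in bijection with those of $S_7$; hence each $\beta_i$ lies in exactly one triangle of $G$ and each $\alpha_i$ in exactly two. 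Labelling these seven triangles $\Delta_0,\dots,\Delta_6$, one sees that $\alpha_i$ is the edge shared by $\Delta_{i-1}$ and $\Delta_i$ while $\beta_i$ is the free edge of $\Delta_i$, so $G$ is a closed triangle trail of seven triangles, consecutive ones glued along the $\alpha_i$, with no further edges or triangles.

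It remains to classify such closed trails, and this is the crux. Each gluing of $\Delta_i$ to $\Delta_{i+1}$ along $\alpha_{i+1}$ is locally one of the two preimages of the bowtie discussed earlier: a center-preserving gluing, in which $\alpha_i,\alpha_{i+1},\alpha_{i+2}$ all meet at a common vertex (the fan), or a center-switching gluing, in which that common vertex advances along the trail (the triangle strip). Running the center-preserving gluing all the way around reproduces the wheel $W_7$ (the closed fan), and running the center-switching gluing around reproduces the squared cycle $C_7^2$ (the closed strip). The main obstacle is showing that no mixed pattern occurs. Here I would use two facts at once: the inner cycle $a_0\cdots a_6$ of $S_7$ is induced, so $\alpha_i$ and $\alpha_{i+2}$ are never co-triangular, which rules out the chords that a switch adjacent to a fan would create; and the trail must close consistently after exactly seven steps. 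Tracking the common vertex around the cycle, a switch moves it to a new vertex while a preserving step fixes it, and the closure condition together with the no-chord condition should force the pattern to be constant. (Equivalently, viewing the trail as a triangulated surface whose free edges form the boundary, the fan is a disk and the strip a M\"obius band, and the odd length $7$ obstructs intermediate gluings.) I expect the bookkeeping ruling out mixtures, in particular handling the vertex coincidences forced when a switch abuts a fan, to be the most delicate step; but inducedness of the inner cycle and the parity of the closure should leave only the all-fan and all-strip configurations, namely $W_7$ and $C_7^2$.
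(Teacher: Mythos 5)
Your forward computation of $T(W_k)$ and $T(C_k^2)$ is fine, and your setup for uniqueness --- excluding $K_4$ via the octahedron, reducing to a closed trail of seven triangles glued along the $\alpha_i$, and classifying each gluing as center-preserving (fan) or center-switching (strip) --- is sound, and in places more careful than the paper, which essentially asserts the trail structure. The gap is exactly the step you flag as the crux: you never rule out mixed patterns, and the mechanisms you propose would not do it. Tracking the common vertex around the cycle eliminates only the case of exactly one switching step. With two switching steps the contradiction is that the two junction edges would both have to equal $d_1d_2$ for the two fan centers $d_1,d_2$, i.e., two distinct $\alpha$'s are the same edge of $G$. With three, the centers of the three maximal fans form a triangle of $G$ whose edges are pairwise non-consecutive inner vertices of $S_7$, contradicting inducedness of the inner $7$-cycle (this is your ``no-chord'' fact). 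But with four, five, or six switching steps the centers form a $4$-, $5$-, or $6$-cycle, no forbidden triangle or repeated edge is immediate, and the raw edge count comes out to $14$ in every such configuration. What actually kills these cases is that seven triangles cannot be partitioned into four or more maximal fans each containing at least two triangles, so some fan degenerates to a single triangle whose free edge becomes a chord among the centers, and one must then chase the forced coincidences and forbidden adjacencies --- precisely the kind of explicit counting ($14$ edges, no $W_n$ for $n\le 6$) that the paper carries out in its Case 2 around a $4$-fan.

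Relatedly, your parenthetical claim that ``the odd length $7$ obstructs intermediate gluings'' points at the wrong mechanism. For every $k\ge 8$, odd or even, one can glue four fans, each of at least two triangles, cyclically around a $4$-cycle of centers and obtain a legitimate preimage of $S_k$ that is neither a wheel nor a squared cycle; this is exactly why the paper must introduce binary-enforced $k$-suns later. So any correct uniqueness proof for $S_7$ must exploit the smallness of $7$ through counting, not through parity or inducedness of the inner cycle alone; an argument built only on the tools you name would ``prove'' uniqueness for larger suns, where it is false.
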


\begin{proof}
	The first part of the lemma is easy to verify.
	
	Let $G$ be a preimage of $S_7$.  As mentioned above, $G$ is composed of seven triangles where each triangle intersects exactly two triangles at two distinct shared edges. Note that $G$ cannot contain an $n-$wheel where $n \le 6$. Moreover,  since $T(G) \cong S_7$ has exactly $14$
	vertices, it follows that $G$  has $14$ edges.  We consider two cases:
	
\begin{enumerate}
	
	\item[Case 1:] 
		$G$ does not contain a $4$-fan.
	\item[]	
		It follows that each set of four consecutive triangles in $G$ forms an 
			alternating strip of triangles, as in Figure \ref{fig:fourstrip}.
		Put together, we find that all seven triangles can be placed in a strip
			of seven alternating triangles, given by 
			vertices $v_1,v_2,\dots,v_9$, where
			$v_i$ is adjacent to $v_{i-2}, v_{i-1},v_{i+1},v_{i+2}$.
		The edges $v_1v_2$ and $v_8v_9$ must be identified to close the 
			cycle in the 7-sun.
		If $v_2$ and $v_8$ are identified, the
			vertices $v_2v_4v_6$ form a triangle
			which is not represented in the 7-sun.
		Hence, $v_1$ and $v_8$ are identified and 
			the vertices form a squared cycle $C_7^2$.
			
	\item[Case 2:] $G$ contains a $4$-fan. 
	
	\item[]
	It follows that there are four consecutive triangles in $G$ that share a common vertex $c.$
	Let $a_1,\dots,a_5$ be the endpoints
		around this $4$-fan centered at $c$ as in Figure \ref{fig:fourfan}.
	Since the triangles $\langle a_1, c, a_2\rangle$ and $\langle a_4, c, a_5\rangle$ intersect other triangles
		at the edges $ca_2$ and $ca_4$, respectively,
		these edges cannot be used to intersect $\langle a_1, c, a_2\rangle$ and $\langle a_4, c, a_5\rangle$ with other adjacent triangles. We consider two cases according to which edges of $\langle a_1, c, a_2\rangle$ and $\langle a_4, c, a_5\rangle$ are shared with neighboring triangles (different from the triangles that are in the $4$-fan shown in Figure~\ref{fig:fourfan}).
	
\begin{enumerate}
		\item[Case 2.1:] Both edges $ca_1$ and $ca_5$ are shared with neighboring triangles. 
		
		\item[]
		We have three possibilities.
		If $a_1 a_5 \in E(G)$, then $G$ contains a \emph{5-wheel} which is impossible.
		If $a_1, a_5$ and $c$ are adjacent to some new vertex $x$, then  $G$ contains a \emph{6-wheel} which is impossible.
		Lastly, if $a_1$ and $c$ are adjacent to some new vertex $x$, and $a_5$ and $c$ are adjacent to some new vertex $y \neq x$, then we have accounted for $13$ edges of $G$ so far. Since $G$ has $14$ edges, it follows that $xy \in E(G)$ and so $G \cong W_7.$
	\item[Case 2.2:] Only one of the edges $ca_1$ and $ca_5$ is shared with neighboring triangles.
	
	\item[] Without loss of generality, let the edge $ca_1$ be shared with a neighboring triangle of $\langle a_1, c, a_2\rangle.$ It follows that the edge $a_4a_5$ must be shared with a neighboring triangle of $\langle a_4, c, a_5\rangle$.
	Since no edge $a_i a_j$ $(1 \le i \neq j \le 5)$ can be introduced as we would obtain a $W_n$ with $n \le 6$, it follows that both $a_1$ and $c$ must adjacent to a new vertex $x$ and 
	both $a_4$ and $a_5$ must be adjacent to a new vertex $y\ne x.$ Since we have already accounted for $13$ of the $14$ edges of $G$, we do not have enough edges left to form the other neighboring triangles of $\langle a_1, c, x\rangle$ and $\langle a_4, a_5, y\rangle.$ \qedhere

%
	
%
%
%
\end{enumerate}	

\end{enumerate}
\end{proof}

It is necessary for later arguments to understand some structure of the squared cycle $C_{k}^2$.

\begin{observation}\label{obs:squaredcycleembed}
	Let $k \geq 7$ be an integer.
	
	\begin{cit}
		\item If $k$ is even, then $C_{k}^2$ is embeddable on the cylinder.
		
		\item If $k$ is odd, then $C_{k}^2$ is embeddable on the M\"obius strip.
	\end{cit}
\end{observation}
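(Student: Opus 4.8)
The plan is to realize $C_k^2$ as a cyclically closed triangle strip and to track how the two ends of the strip are glued together as a function of the parity of $k$; the parity of the resulting gluing is exactly what distinguishes the cylinder from the M\"obius strip.

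First I would unroll the squared cycle into a linear triangle strip. Relabel the vertices as $v_1, \dots, v_k$, with $v_j$ corresponding to vertex $j-1$ of the $k$-cycle, and introduce two repeated copies $v_{k+1} \equiv v_1$ and $v_{k+2} \equiv v_2$. Drawing $v_j$ at horizontal position $j$ and at height $0$ when $j$ is odd and height $1$ when $j$ is even produces a planar triangulated band: the distance-two edges $v_j v_{j+2}$ become the horizontal segments forming the bottom boundary $v_1 v_3 v_5 \cdots$ and the top boundary $v_2 v_4 v_6 \cdots$, while the distance-one edges $v_j v_{j+1}$ become the slanted diagonals, so that each triangle $\langle v_j, v_{j+1}, v_{j+2}\rangle$ is one face of the strip (compare Figure \ref{fig:fourstrip}). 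This band is a topological rectangle whose two short sides are the edges $v_1 v_2$ (left) and $v_{k+1} v_{k+2}$ (right), and whose planar embedding is crossing-free. A routine check shows that the wraparound distance-two edges $v_{k-1}v_{k+1}$ and $v_k v_{k+2}$ are produced correctly, so after the identifications this band carries exactly the $2k$ edges of $C_k^2$ with no extra vertices collapsed.

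Next I would recover $C_k^2$ by gluing the right side $v_{k+1}v_{k+2}$ onto the left side $v_1 v_2$ via $v_{k+1} \equiv v_1$ and $v_{k+2} \equiv v_2$. The surface obtained depends only on whether this identification of opposite sides of the rectangle is untwisted or half-twisted, and this is where parity enters. The vertex $v_{k+1}$ lies on the bottom boundary precisely when $k+1$ is odd, i.e.\ when $k$ is \emph{even}; in that case the bottom-right endpoint $v_{k+1}$ glues to the bottom-left endpoint $v_1$ and the top-right endpoint $v_{k+2}$ to the top-left endpoint $v_2$, which is the untwisted identification and yields a cylinder. Tracing the boundary confirms this: the bottom path closes to the even-vertex cycle and the top path to the odd-vertex cycle, giving the two boundary circles of a cylinder. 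When $k$ is \emph{odd}, $v_{k+1}$ instead lies on the top boundary, so the gluing attaches the top-right endpoint to the bottom-left endpoint and vice versa; this is the half-twist identification and yields a M\"obius strip, and correspondingly the top and bottom boundary paths now merge into the single boundary circle characteristic of the M\"obius strip.

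I would close by observing that in both cases the triangular faces of the band map to pairwise non-overlapping faces after gluing, so the embedding stays crossing-free, which completes both claims. The only genuinely delicate step—and the one I expect to be the main obstacle—is the orientation bookkeeping in the gluing: verifying which endpoint of the right end attaches to which corner of the left end, and that this attachment is untwisted exactly when $k$ is even. Everything else, namely that the zigzag strip is a valid planar triangulation and that the identifications reproduce $C_k^2$ faithfully, is a straightforward verification.
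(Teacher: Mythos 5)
Your argument is correct and is exactly the triangle-strip picture the paper has in mind: the paper states this as an unproved Observation and simply illustrates the odd case with the M\"obius embedding of $C_7^2$ in Figure~\ref{fig:mobiustrianglestrip}, which is precisely your unrolled zigzag band with a half-twisted end-gluing. Your parity bookkeeping for which corners of the rectangle get identified is sound, so your write-up supplies a complete justification for what the paper treats as self-evident.
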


The M\"obius strip embedding of $C_{7}^2$ is given below in Figure \ref{fig:mobiustrianglestrip}.
	
\begin{figure}[ht]
	\centering		
			\includegraphics[width=0.6\textwidth]{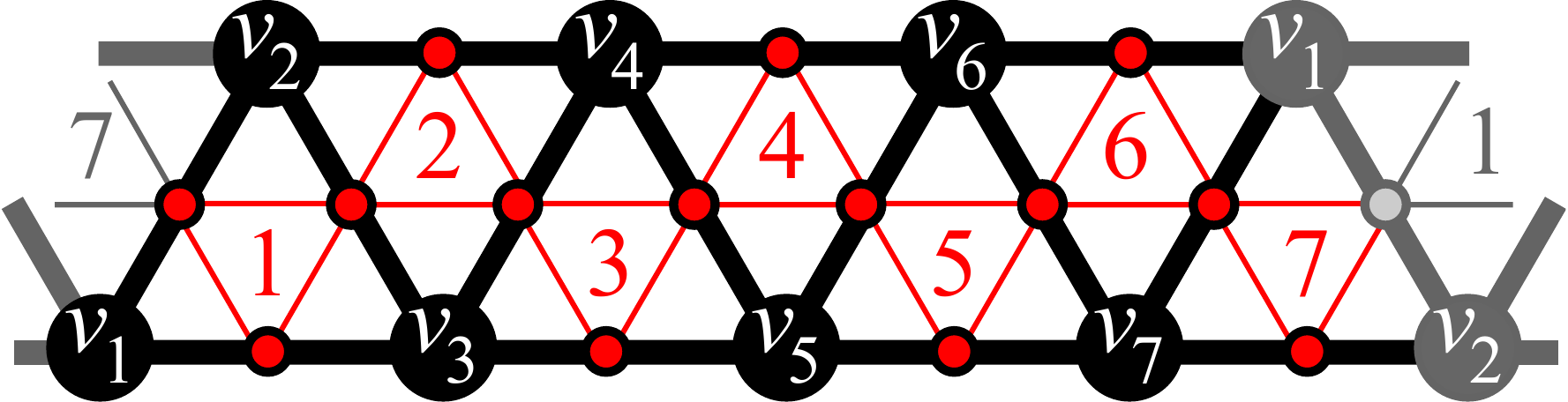}		
	\caption{\label{fig:mobiustrianglestrip}Embedding $C_{7}^2$ on the M\"obius strip.
		The triangular line graph $S_7$ is shown in red.}
\end{figure}


The two preimages of the $7$-sun are very useful for encoding binary value. 
Larger $k$-suns have more than two preimages, 
but it is possible to constrain larger $k$-suns 
to have exactly two preimages by embedding $7$-suns into the $k$-sun.

\begin{definition}
	Let $k \geq 9$ be an integer.
	Define the \emph{binary-enforced $k$-sun}, ${S}_k^{b}$, to be the graph built as follows:
	
	\begin{cem}
		\item Start with a $k$-sun, $S_k$.
		\item Label the vertices in the $k$-cycle as $v_1,\dots,v_k$, with subscripts taken modulo $k$.
		\item For each $i \in \{1,\dots,k\}$, 
				add a sequence of three triangles between 
				vertices $v_i$ and $v_{i+4}$, forming a $7$-sun (see Figure~\ref{subfig:12cyclegluing} for the case when $k=12$).

	\end{cem}
\end{definition}

\begin{lemma}\label{lma:12sun}
	Let $k \geq 9$ be an integer.
	A binary-enforced $k$-sun has exactly two preimages. 
	In the first preimage, the $k$-sun has a $k$-wheel preimage 
		and all 7-suns have 7-wheel preimages
		with a common center vertex. 
	In the second preimage, the $k$-sun has a squared cycle preimage 
		and all 7-suns have squared cycle preimages.
\end{lemma}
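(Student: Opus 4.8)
The plan is to exploit the $k$ copies of the $7$-sun that sit inside $S_k^b$ as triangle-induced subgraphs, to pin each copy down to one of two local types via Lemma~\ref{lma:7sun}, and then to propagate a single global choice around the cycle. For each $i \in \{1,\dots,k\}$, let $\Sigma_i$ be the $7$-sun formed by the four big-sun triangles on the edges $v_iv_{i+1},\dots,v_{i+3}v_{i+4}$ together with the three triangles added between $v_i$ and $v_{i+4}$. First I would verify that, for $k \ge 9$, the construction introduces no edges or triangles beyond those listed in the definition, so that each $\Sigma_i$ is genuinely an induced subgraph and, moreover, a triangle-induced subgraph $\Sigma_i \lhd S_k^b$. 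Then Lemma~\ref{lma:TriangleInducedClosure} applied to each $\Sigma_i$, together with Lemma~\ref{lma:7sun}, shows that any preimage $(G,f)$ of $S_k^b$ restricts to a preimage of each $\Sigma_i$ that is isomorphic to either $W_7$ or $C_7^2$; accordingly I will call $\Sigma_i$ a \emph{wheel} or a \emph{squared cycle}.

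The heart of the argument is a local invariant that distinguishes the two types and propagates between neighbors. The key observation is how the seven cycle-vertices of a $7$-sun appear as edges of $G$: in a $W_7$ preimage they are the seven spokes, which all share the center, whereas in a $C_7^2$ preimage they are the seven rim edges of the squared cycle, no two non-consecutive of which meet. Now $\Sigma_i$ and $\Sigma_{i+1}$ share the four consecutive cycle-vertices $v_{i+1},v_{i+2},v_{i+3},v_{i+4}$. If $\Sigma_i$ is a wheel with center $c$, then these four vertices, read as edges of $G$, all contain $c$; were $\Sigma_{i+1}$ a squared cycle, the same four vertices would be four consecutive rim edges of a $C_7^2$, and these cannot all pass through a single vertex (already the first and third are disjoint) — a contradiction. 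Hence $\Sigma_{i+1}$ is also a wheel, and since four distinct edges cannot simultaneously lie on two different vertices, its center is again $c$. Iterating over the cyclic indices $i=1,\dots,k$, which chain all $k$ copies together, shows that if one $\Sigma_i$ is a wheel then every $\Sigma_j$ is a wheel with one common center $c$; the contrapositive forces that otherwise every $\Sigma_j$ is a squared cycle.

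It then remains to assemble the two global preimages and confirm both exist. In the all-wheel case every cycle-vertex of $S_k^b$ (the $v_j$ as well as the added vertices) is an edge of $G$ through the single vertex $c$, so the original $k$-sun has the $k$-wheel as its preimage and each embedded $7$-sun is a $7$-wheel, all with common center $c$, exactly as claimed. In the all-squared-cycle case the analogous reading of the cycle-vertices as rim edges forces the $C_k^2$ preimage with every $7$-sun a squared cycle. Finally I would confirm existence by exhibiting both graphs explicitly and checking that applying $T$ returns $S_k^b$, which is routine given the (easy) first half of Lemma~\ref{lma:7sun}. Together these establish that $S_k^b$ has exactly two preimages up to isomorphism.

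I expect the main obstacle to be the opening step: verifying cleanly that each $\Sigma_i$ is truly triangle-induced, i.e. that the overlapping added $7$-suns and the big sun create no stray edges among the $v_j$ and no unexpected triangles on their shared edges. This bookkeeping is exactly where the hypothesis $k \ge 9$ is used, since it guarantees that $v_i$ and $v_{i+4}$ are well separated and that distinct added $7$-suns meet only in the intended vertices. By contrast, once the cycle-vertex invariant of the preceding paragraph is isolated, the propagation and the final assembly are short.
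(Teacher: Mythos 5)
Your proposal is correct and follows essentially the same route as the paper: identify the $k$ embedded $7$-suns as triangle-induced subgraphs, apply Lemmas~\ref{lma:TriangleInducedClosure} and~\ref{lma:7sun} to restrict each to a $W_7$ or $C_7^2$ preimage, propagate a single choice around the cycle via the overlap of consecutive $7$-suns, and exhibit the two global preimages by explicit gluing. The only difference is cosmetic: you phrase the propagation through the invariant that the $7$-cycle vertices map to spokes through a common center versus consecutive rim edges, whereas the paper speaks of $4$-fans versus $4$-strips among consecutive triangles --- the same forcing argument in different words.
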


\begin{proof}
	Suppose $S_k^b$ is a triangular line graph with some preimage.
	Note that each 7-sun is a triangle-induced subgraph of $S_b^k$ 
		so by Lemmas \ref{lma:TriangleInducedClosure} 
			and \ref{lma:7sun}, it preimage must be a 7-wheel or a squared cycle.
	Each sequence of four consecutive triangles in the $k$-sun
		forms either a $4$-fan or $4$-strip in the preimage,
		depending on whether the preimage of the enclosing 7-sun is a wheel or squared cycle, respectively.
		
	Consider a single $7$-sun, $S$.
	If $S$ has a $7$-wheel preimage, then 
		the four triangles on the $k$-sun 
		form a $4$-fan, with the
		vertices on the $k$-cycle
		corresponding to edges 
		incident to the center of the fan.
	An adjacent $7$-sun $S'$ shares
		three consecutive triangles with $S$.
	The last triangle in the $k$-cycle and $S'$ (but not in $S$)
		has a vertex whose corresponding edge in the preimage
		is
		incident to the center of the 4-fan in $T^{-1}(S)$.
	Hence, the preimage of this triangle and the three 
		triangles in both $S$ and $S'$ create a 4-fan 
		and forces $S'$ to have a wheel preimage.
	By iterating through all consecutive wheels, 
		all $7$-suns in $S_b^k$ have wheel preimages.
	Hence, the 7-suns have either all wheel preimages
		or all squared cycle preimages.
	
	If all 7-suns have wheel preimages, 
		then each set of four consecutive triangles have
		$4$-fan preimages.
	Two consecutive 4-fans in the preimage 
		share the center with the common 3-fan.
	By iterating through consecutive $4$-fans around the
		$k$-cycle, all 7-wheel preimages share a common center.
	Hence, the $k$-sun preimage must be a wheel.
	
	If all 7-suns have squared cycle preimages, 
		then the consecutive $4$-strip preimages
		intersect at a $3$-fan.
	The edges of the $3$-fan which intersect the 
		other triangles of the $4$-strips are not
		incident to the center of the $3$-fan.
	Hence, every sequence of five triangles in the $k$-sun
		have triangle-strip preimages.
	Iterating on all lengths $\ell \in \{5,6,\dots,k-1\}$
		shows the $k$-sun must have 
		a squared-cycle preimage, since all sequences of $\ell$ 
		triangles have triangle-strip preimages.
	
	It remains to show that these two possibilities 
		are realizable with actual preimages.
	The all-wheels case can be viewed as a $k$-wheel, $W_k$,
		placed in the plane, with 3-fans glued
		between the ends of 4-fans in the wheel, sharing the center vertex.
	See Figure \ref{subfig:12wheelgluing} for an example of this process.

	\begin{figure}[ht]\centering
		\mbox{
			\subfigure[\label{subfig:12wheelgluing}
					$W_{12}$ with a gluing.]{
				\includegraphics[height=1.25in]{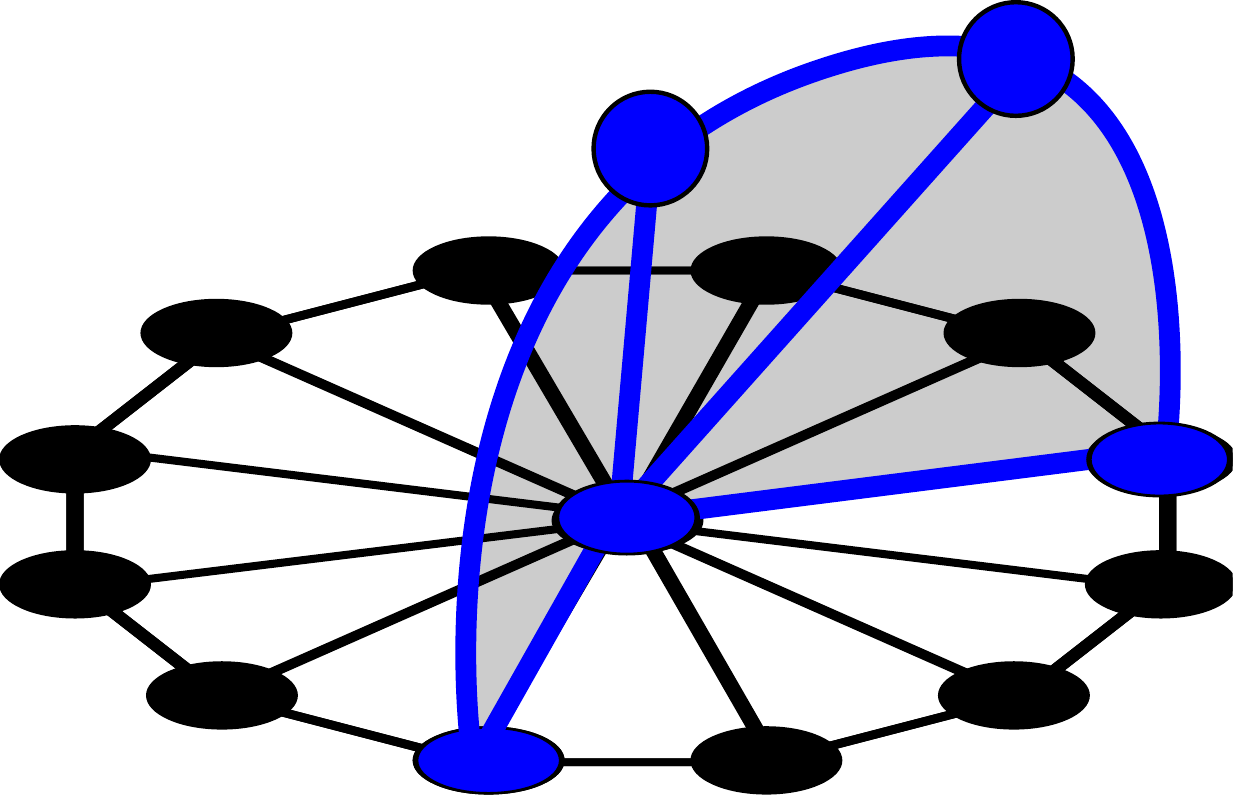}
			}
			\quad
			\subfigure[\label{subfig:12cyclecylinder}
					$C_{12}^2$ and $T(C_{12}^2)$ embedded on a cylinder.]{
				\includegraphics[height=1.25in]{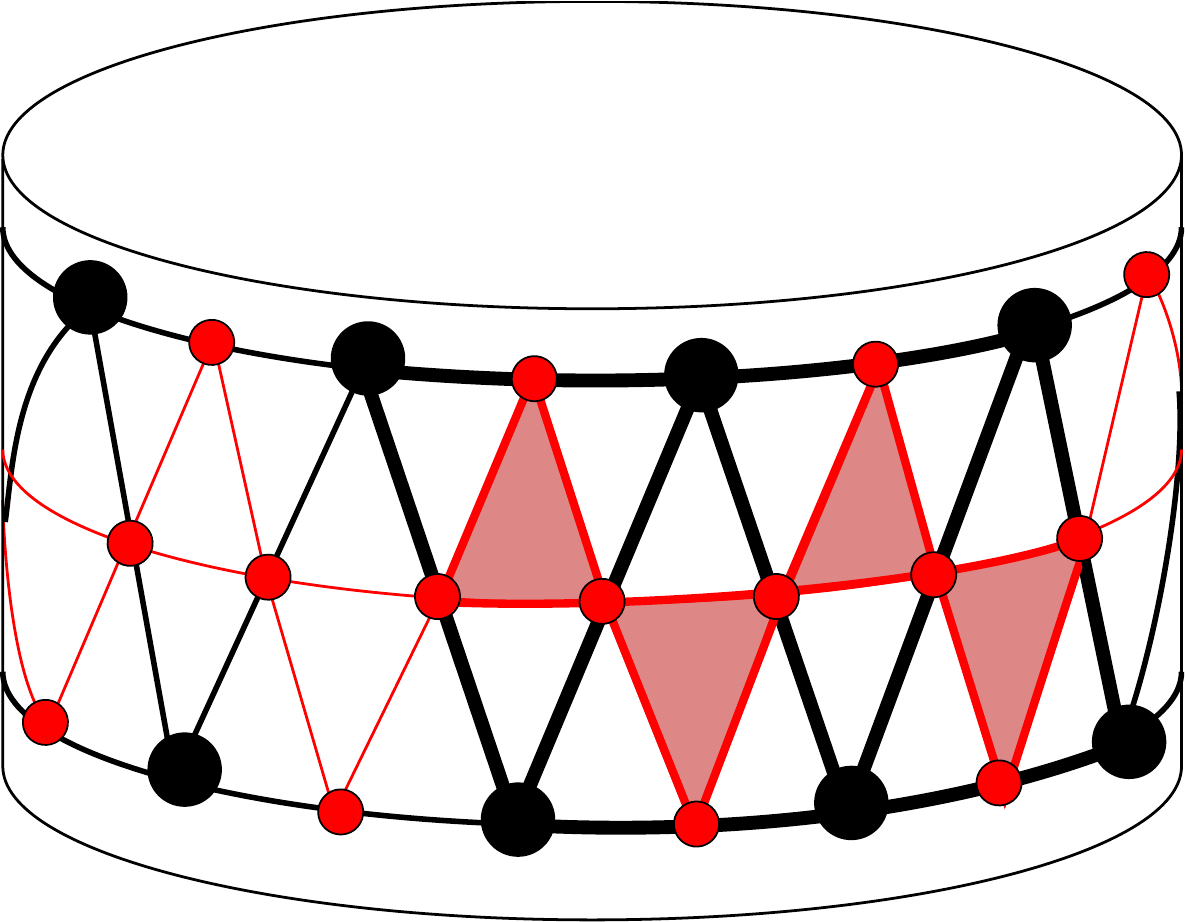}
			}
			\quad
			\subfigure[\label{subfig:12cyclegluing}
					$C_{12}^2$ and $T(C_{12}^2)$ with twisted gluing.]{
				\includegraphics[height=1.25in]{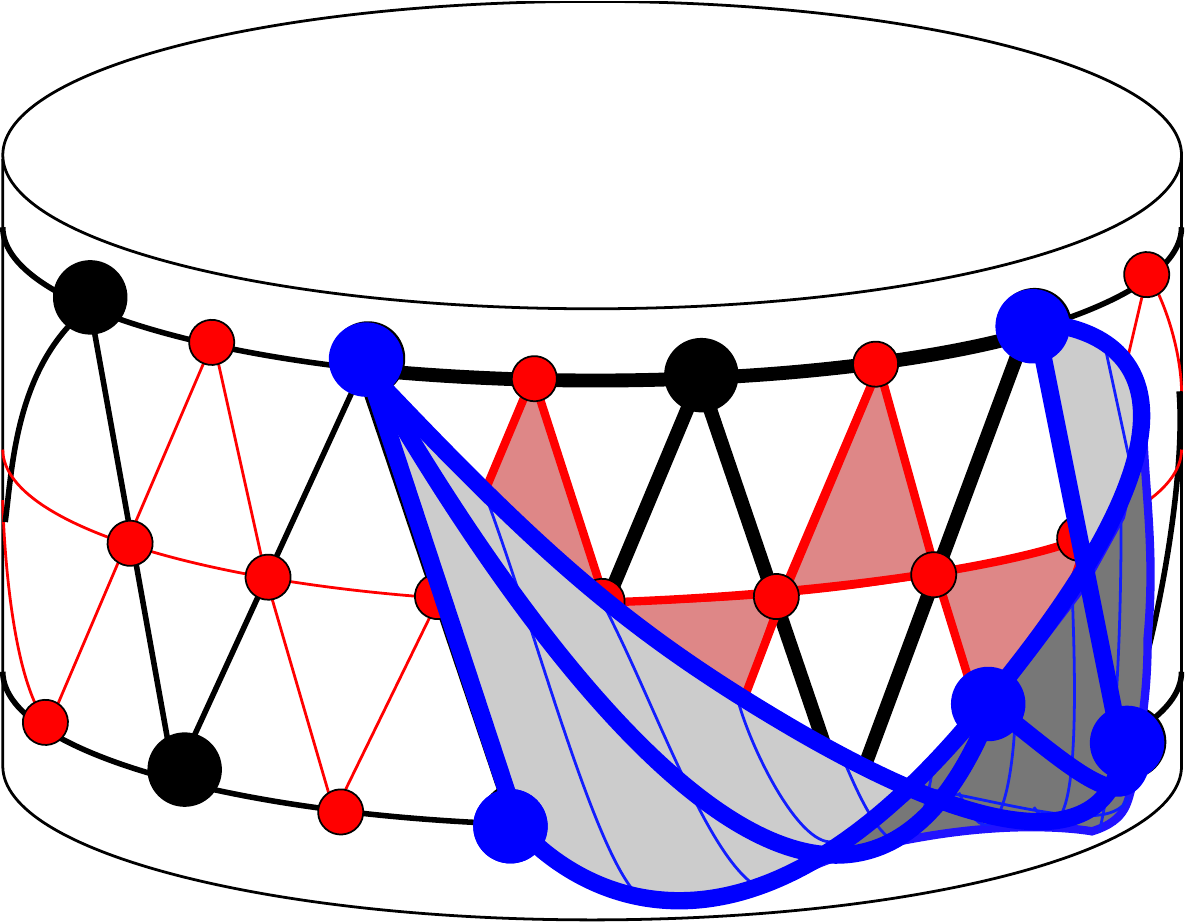}
			}
		}
		\caption{\label{fig:12cycle}Visualizing $S_{12}^b$ and its possible preimages}
	\end{figure}
		
	The all-cycles case can be viewed as the squared cycle, $C_k^2$,
		embedded on either the cylinder or M\"obius strip 
		(depending on the parity of $k$, see Observation \ref{obs:squaredcycleembed}).
	Then, a $3$-fan can be glued to the 
		last edges of each 4-triangle strip, 
		adding a twist to make a M\"obius strip.
	Figures \ref{subfig:12cyclecylinder} and \ref{subfig:12cyclegluing} 
		demonstrate this gluing procedure for $k = 12$.
\end{proof}
			
Armed with these structures, 
	we proceed to define gadgets 
	that encode the logic
	of a boolean formula.


\section{Encoding a Boolean Formula}

Let $\phi = \wedge_{j=1}^{m} C_j$ 
	be a formula 
	in conjunctive normal form 
	with $m$ clauses $C_1,\dots,C_m$
	on the variables $x_1,\dots, x_n$
	where each clause has size three. 
Label each clause as $C_{j} = u_{i_{j,1}} \vee u_{i_{j,2}} \vee u_{i_{j,3}}$, where
	each $u_{i_{j,k}}$ is a variable $x_{i_{j,k}}$ or its complement $\overline{x}_{i_{j,k}}$.
For our reduction, 
	we construct graph gadgets 
	that form a graph $G_\phi$ 
	which is a triangular line graph 
	if and only if $\phi$ is a satisfiable formula. 

We begin by defining a gadget which stores a binary value.
	
\subsection{Variable Gadgets}\label{subsection:variablegadget}

\def\EQUAL{{\sc{Equal}} }
\def\NOT{{\sc{Not}} }
\def\ROOT{{\sc{Root}} }

Begin by assigning variables to distinct copies of a {7-sun}.
The two preimages of a $7$-sun 
	correspond to 
	the two possible values 
	of a variable.
If the preimage is a 7-wheel, $W_7$, 
	then  that variable is assigned a false value, 
	and if it is a squared cycle, $C_7^2$, 
	then that variable is assigned a true value.

\begin{definition}[Variable Gadget]
For each variable $x_i$, create a $7$-sun labeled $H_{x_i}$.
Associate the value of $x_i$ with the preimage of $H_{x_i}$ as
	$x_i = 1$ if and only if $H_{x_i}$ has a $C_7^2$ preimage.
\end{definition}

There are two different ways to connect two 7-suns. 	
One connection guarantees the two preimages are isomorphic while
the other connection ensures the preimages are not isomorphic.
The connections act as logic gates in the graph.


\begin{definition}[\EQUAL Gadget]
		Let $S$ and $S'$ be two 7-suns and let
	$B$ and $B'$ be bowties in $S$ and $S'$, respectively.
	Identify the vertices of the bowties $B$ and $B'$ in the following way:
	
	\begin{cem}
		\item Identify the centers of the bowties $B$ and $B'$.
		\item In one triangle of $B$ and in one triangle of $B'$, identify the vertices of different
			degrees in $S$ and $S'$. 
		(Here, the vertex on the 7-cycle in $S$ is identified with 
			the vertex not on the 7-cycle in $S'$.)
		\item Repeat (2) for the remaining triangles in $B$ and $B'.$		
	\end{cem}	
\end{definition}

Figure \ref{fig:EQUALgadget} shows how the bowties are identified between two variable gadgets 
	joined by an \EQUAL gadget
	by highlighting which vertices are associated with the inner cycle
	of each 7-sun.
	
\begin{figure}[h]
\centering
	\mbox{
		\subfigure[\label{fig:EQUALgadget}The \EQUAL gadget.]{
			\includegraphics[width=1.25in]{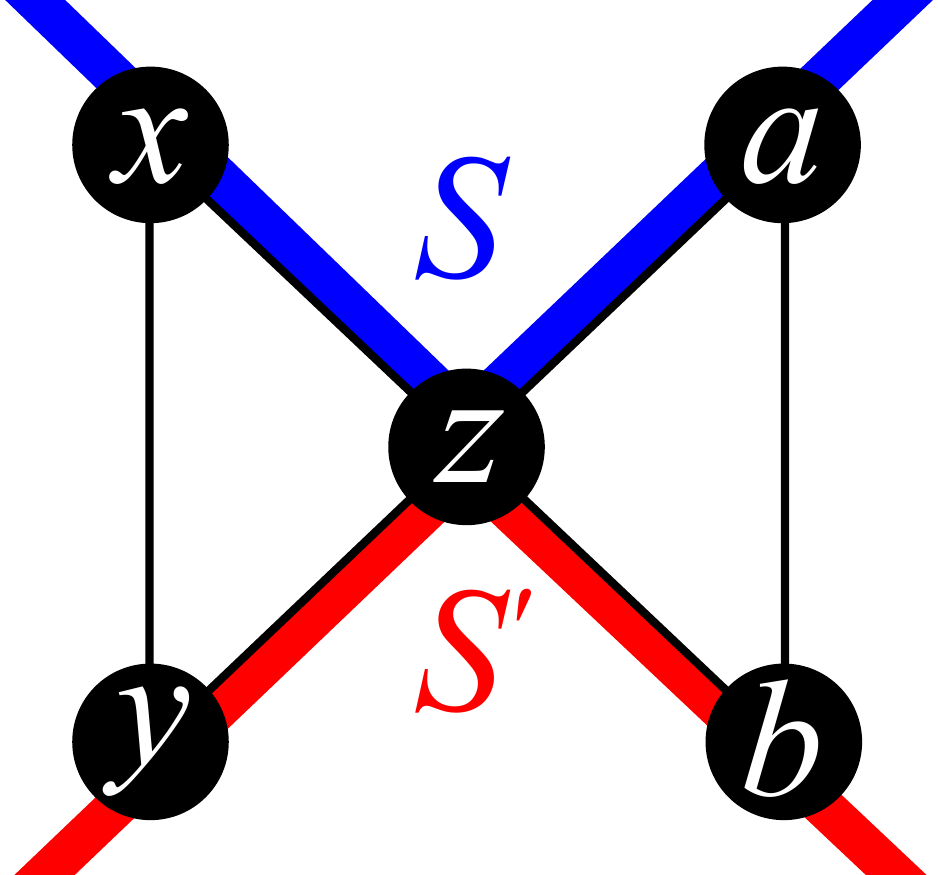}
		}
		\qquad
		\subfigure[\label{fig:NOTgadget}The \NOT gadget.]{
			\includegraphics[width=1.25in]{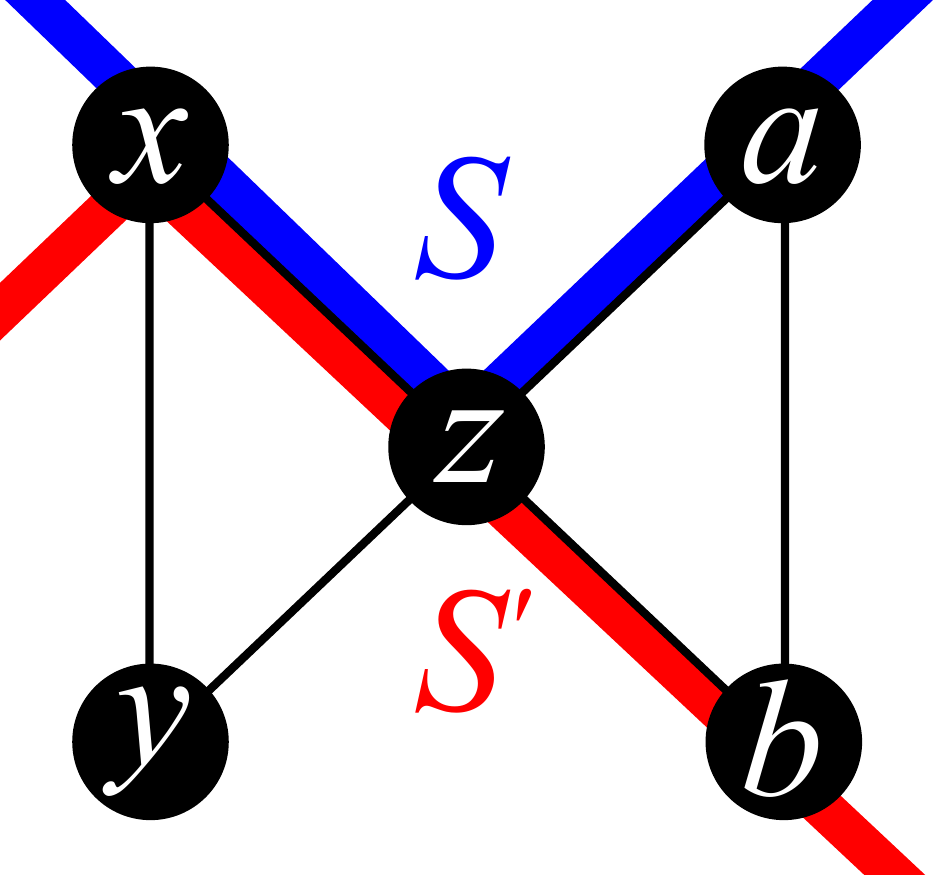}
		}
	}
\caption{\label{fig:EQUALandNOTgadgets}The \EQUAL and \NOT gadgets.}
\end{figure}

\begin{definition}[\NOT Gadget]
		Let $S$ and $S'$ be two 7-suns and let $B$ and $B'$ be bowties in $S$ and $S'$, respectively.
	Identify the vertices of the bowties $B$ and $B'$ in the following way:
	
	\begin{cem}
		\item Identify the centers of the bowties.
		\item In one triangle of $B$ and in one triangle of $B'$, identify the vertices of the same
			degree in $S$ and $S'$.
			 (Here, the vertices lying on the $7$-cycles of 
				$S$ and $S'$ are identified.)
		\item In the remaining triangles of $B$ and $B'$, identify the vertices
			of different degrees in $S$ and $S'$.  
			(Here, the vertex on the $k$-cycle in $S$
				are identified with the vertex not on the $k$-cycle in $S'$.)
	\end{cem}	
\end{definition}

Figure \ref{fig:NOTgadget} shows how the bowties are identified between two variable gadgets 
	joined by a \NOT gadget
	by highlighting which vertices are associated with the inner cycle
	of each variable gadget.

\begin{lemma}\label{lma:EQUALandNOTgadget}
	Consider two $7$-suns $S$ and $S'$ that intersect at a bowtie.
	
	\begin{enumerate}
		\item[1.] If the intersection is an \EQUAL gadget, then
				$T^{-1}(S)$ is isomorphic to $T^{-1}(S')$.
		\item[2.] If the intersection is a \NOT gadget, then
				$T^{-1}(S)$ is not isomorphic to $T^{-1}(S')$.
	\end{enumerate}
\end{lemma}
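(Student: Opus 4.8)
The plan is to reduce both parts to a single structural fact: the preimage type of a $7$-sun is recorded in how the double triangle sitting over one of its bowties attaches to the shared center edge. By Lemma~\ref{lma:7sun} together with the triangle-induced closure (Lemma~\ref{lma:TriangleInducedClosure}), each of $T^{-1}(S)$ and $T^{-1}(S')$ is either a $7$-wheel $W_7$ or a squared cycle $C_7^2$; since $W_7 \not\cong C_7^2$, the statements ``$T^{-1}(S)\cong T^{-1}(S')$'' and ``$T^{-1}(S)\not\cong T^{-1}(S')$'' are exactly ``same type'' and ``different type.'' Because $B$ and $B'$ are \emph{identified} in the glued graph, the vertices of the shared bowtie correspond to one common set of edges in any fixed preimage of the ambient graph; in particular the bowtie center maps to a single edge $\epsilon$ (the shared edge of the double triangle $K_4-e$ over $B$), with endpoints $P$ and $Q$, and every other bowtie vertex maps to an edge incident to exactly one of $P,Q$.

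First I would establish the key sub-claim by reading off the two preimages. Each triangle of $B$ has, besides the center, a vertex on the $7$-cycle of $S$ (degree four, whose preimage edge I call $\gamma$) and a vertex off the cycle (degree two, giving an edge $\omega$); write $\gamma_1,\omega_1,\gamma_2,\omega_2$ for the four non-center edges. Tracing the explicit structure gives two possibilities: (i) if $T^{-1}(S)=W_7$, then $\gamma_1,\gamma_2$ are incident to one endpoint of $\epsilon$ and $\omega_1,\omega_2$ to the other, since in the wheel the two cycle-vertex edges are spokes meeting at the hub while the two off-cycle edges are rim edges meeting the far end of $\epsilon$; (ii) if $T^{-1}(S)=C_7^2$, then the incidences are \emph{crossed}, with $\{\gamma_1,\omega_2\}$ meeting one endpoint and $\{\omega_1,\gamma_2\}$ the other. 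These are precisely the two labeled preimage classes of the bowtie from Section~\ref{subsection:7sun}, so the incidence pattern both determines and is determined by the type; the identical statement holds for $S'$, with endpoints $P',Q'$ of its center edge. Identifying the centers forces $\{P',Q'\}=\{P,Q\}$.

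It then remains to run the case analysis dictated by the gadget. Each identification in the \EQUAL or \NOT gadget forces a non-center edge of $B$ to coincide with a specified non-center edge of $B'$, hence to be incident to the same endpoint of $\epsilon$ from both viewpoints. For \EQUAL the rule is cycle-with-off in both triangles ($\gamma_k^{B}\equiv\omega_k^{B'}$ and $\omega_k^{B}\equiv\gamma_k^{B'}$); substituting patterns (i) and (ii) shows that assuming opposite types forces the two endpoints $P,Q$ to collapse, a contradiction, while matching types are consistent, so \EQUAL yields $T^{-1}(S)\cong T^{-1}(S')$. For \NOT the rule matches same-degree vertices in one triangle and different-degree vertices in the other; the analogous substitution shows that \emph{equal} types force $P=Q$, so the types must differ and $T^{-1}(S)\not\cong T^{-1}(S')$. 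The main obstacle, and the only place genuine care is required, is the bookkeeping in this last step: one must carry the high/low degree label of each vertex correctly through the identification, use that each non-center edge is incident to \emph{exactly one} endpoint of $\epsilon$, and verify that the derived contradiction survives the possible swap $P\leftrightarrow Q$ between the two descriptions of the shared center edge.
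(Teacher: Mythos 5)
Your proposal is correct and follows essentially the same route as the paper: both arguments first reduce to the dichotomy $W_7$ versus $C_7^2$ via Lemmas \ref{lma:TriangleInducedClosure} and \ref{lma:7sun}, then read off from the preimage type which endpoint of the shared center edge each non-center bowtie edge meets (spokes at the hub versus the ``crossed'' pattern of the squared cycle), and finally check compatibility of these incidence patterns under the \EQUAL and \NOT identifications. Your version merely packages the paper's case-by-case tracing of the edges $e_x,e_y,e_z,e_a,e_b$ into a uniform ``which endpoint of $\epsilon$'' bookkeeping, and the verifications (including the $P\leftrightarrow Q$ swap) go through as you describe.
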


\begin{proof} Note that $S$ and $S'$ are triangle-induced subgraphs of the combined graph. 
	By Lemmas \ref{lma:TriangleInducedClosure} and \ref{lma:7sun}, the preimage of each is either a $W_7$ or a $C_7^2.$ 

	Let $x,y,z,a,b$ be the vertices in the bowtie of the gadget between
		$S$ and $S'$ as given in Figure \ref{fig:EQUALandNOTgadgets} and let $e_x, e_y, e_z, e_a$ and $e_b$, respectively, be the corresponding edges in $G.$
	Recall that $T^{-1}(S)$ and $T^{-1}(S')$ are isomorphic to either
		the 7-wheel $W_7$ or the squared cycle $C_7^2$.
	
	\begin{enumerate}
	\item[Case 1:] $S$ and $S'$ intersect at an \EQUAL gadget.
	
	Consider the case where $T^{-1}(S)$ is a wheel.
	Then, the edges  $e_x$, $e_z$, and $e_a$ are spokes of $T^{-1}(S)$ while 
	$e_y$ and $e_b$
		are consecutive edges on the $7$-cycle of $T^{-1}(S)$, 
		both of which are incident to $e_z$ at the same vertex.
	Hence, the edges $e_y$ and $e_b$ are incident. 
	Since $y$ and $b$ each have degree 4 in $S'$ 
		and $e_y$ and $e_b$ are incident in $G$ 
		(and therefore also in $T^{-1}(S')$), 
		it follows that $T^{-1}(S')$ is not a squared cycle. 
	Thus, $T^{-1}(S')$ is the wheel $W_7.$

	On the other hand, if $T^{-1}(S)$ is a squared cycle,
		then the edges $e_x$ and $e_a$ are not incident in $G.$
	Moreover, the edges $e_y$ and $e_b$ are not incident in $G.$
	Hence, $T^{-1}(S')$ is not a wheel and must be the squared cycle $C_7^2.$

	\item[Case 2:] $S$ and $S'$ intersect at a \NOT gadget.
	
	Consider the case where $T^{-1}(S)$ is a wheel.
	Then the edges $e_x$, $e_z$, and $e_a$ are spokes and $e_b$
		is on the $7$-cycle of $T^{-1}(S).$
	Hence, the edges for $e_x$ and $e_b$ are not incident in $G$ (and therefore also in $T^{-1}(S')$).
	It follows that $e_x$ and $e_b$ do not form spokes in $T^{-1}(S')$ and so $T^{-1}(S')$ is 
		not a wheel. Hence, $T^{-1}(S')$ is the squared cycle $C_7^2.$
		
	Now assume that $T^{-1}(S)$ is a squared cycle $C_7^2$.
	Then edges $e_x$, $e_y$, and $e_z$ form a triangle, 
	$e_a$ is incident to $e_z$ but not $e_x$, and
	$e_b$ is incident to $e_x$ in $G.$
 	This means that $e_x$, $e_b$, and $e_z$ share a common vertex in $G.$
	Moreover, $e_x$, $e_b$ and $e_z$ are incident in $G$ while $e_y$ and $e_b$ are not.
		Thus, $T^{-1}(S')$ is not the squared cycle $C_7^2$
		(since if $T^{-1}(S')$ is a squared cycle, then $e_b$ and $e_y$ would be incident
			while $e_b$ and $e_x$ would not be). Consequently, $T^{-1}(S')$ is isomorphic to the $7$-wheel $W_7.$ \qedhere
	\end{enumerate}
\end{proof}

In order to provide multiple connection points for clauses to access the value of $x_i$ or $\overline{x}_i$,
	we connect a sequence of $7$-suns to each $7$-sun $H_{x_i}.$ 
To that end, we assign three bowties within each $7$-sun two of which are for special use with the \EQUAL and \NOT gadgets.
One bowtie is labeled \ROOT 
		which is used to connect 
		to other variable gadgets
		via \EQUAL or \NOT gadgets, 
	one is labeled \EQUAL 
		which is reserved 
		for other variable gadgets 
		to connect 
		via an \EQUAL gadget, 
	and one is labeled \NOT 
		which is reserved 
		for other variable gadgets 
		to connect 
		via a \NOT gadget.
Figure \ref{fig:8sunattach} shows the three attachment points
	and a symbolic representation of the connections.

\begin{figure}[h]\centering
	\mbox{
		\subfigure[The attachment points]{
			\includegraphics[height=1in,width=1in]{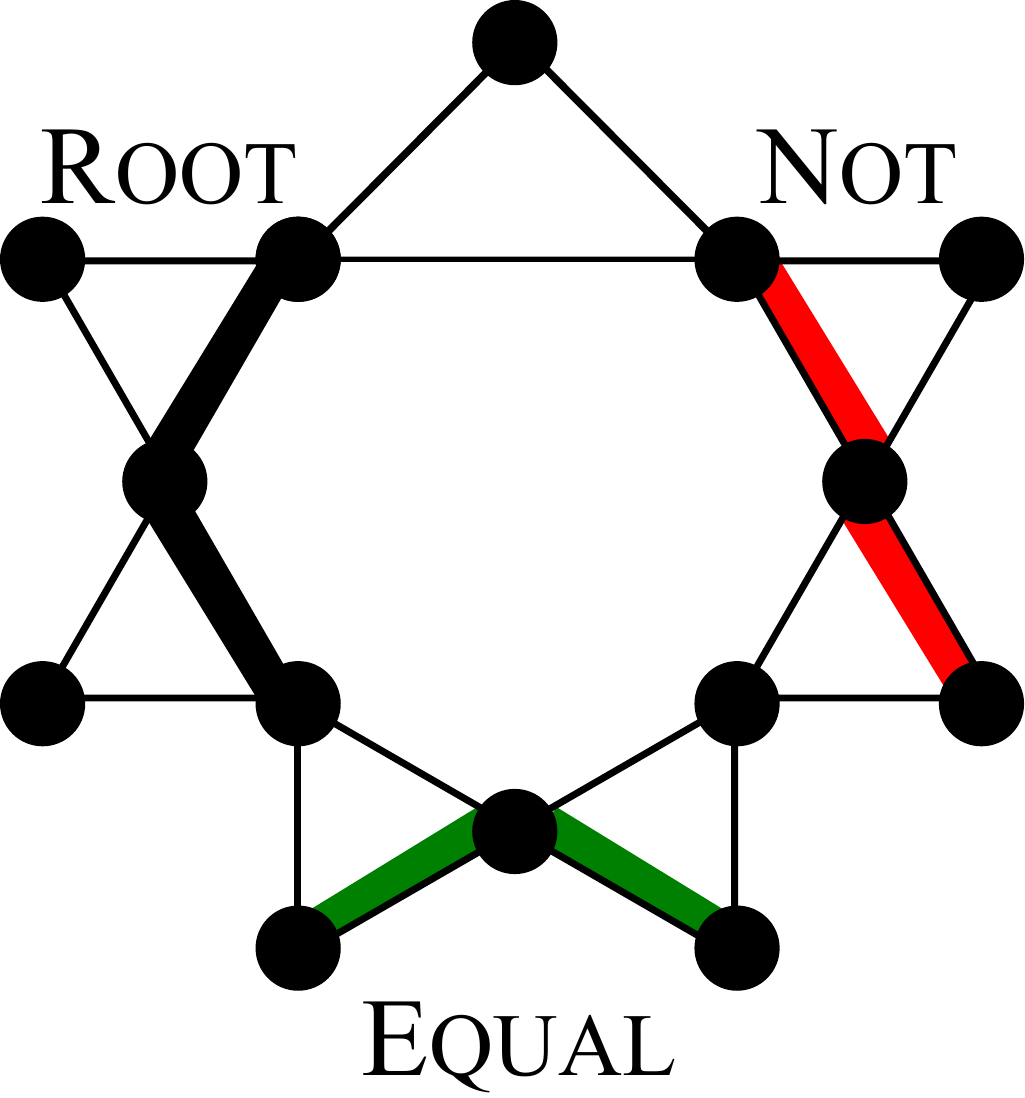}
		}
		\hskip 1truecm
		\subfigure[\label{7SunAttachMultiple}How they attach]{
			\includegraphics[height=1in]{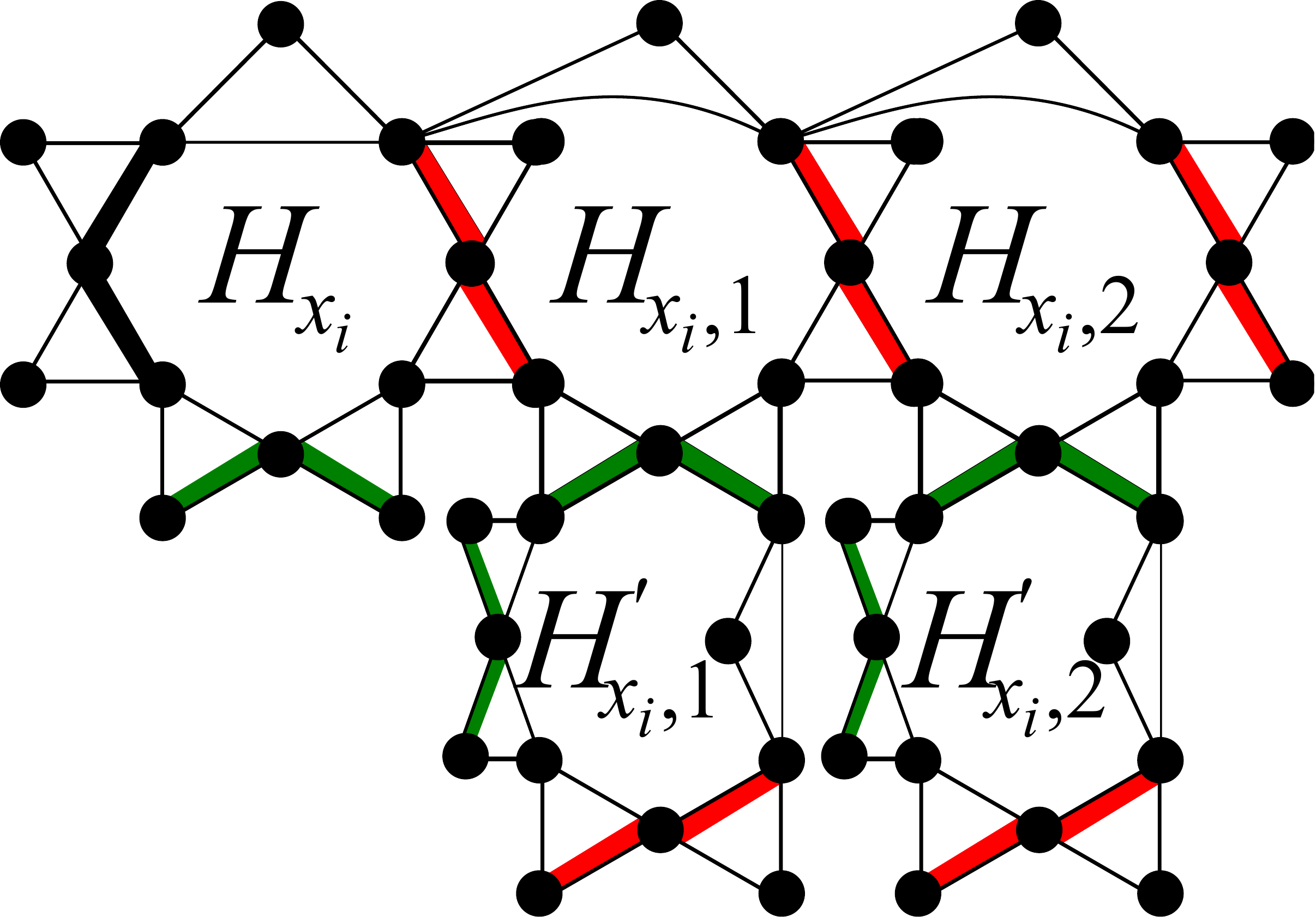}
		}
		\hskip 1 truecm
		\subfigure[\label{7SunAttachSymbolic}A symbolic representation]{
			\includegraphics[height=1in]{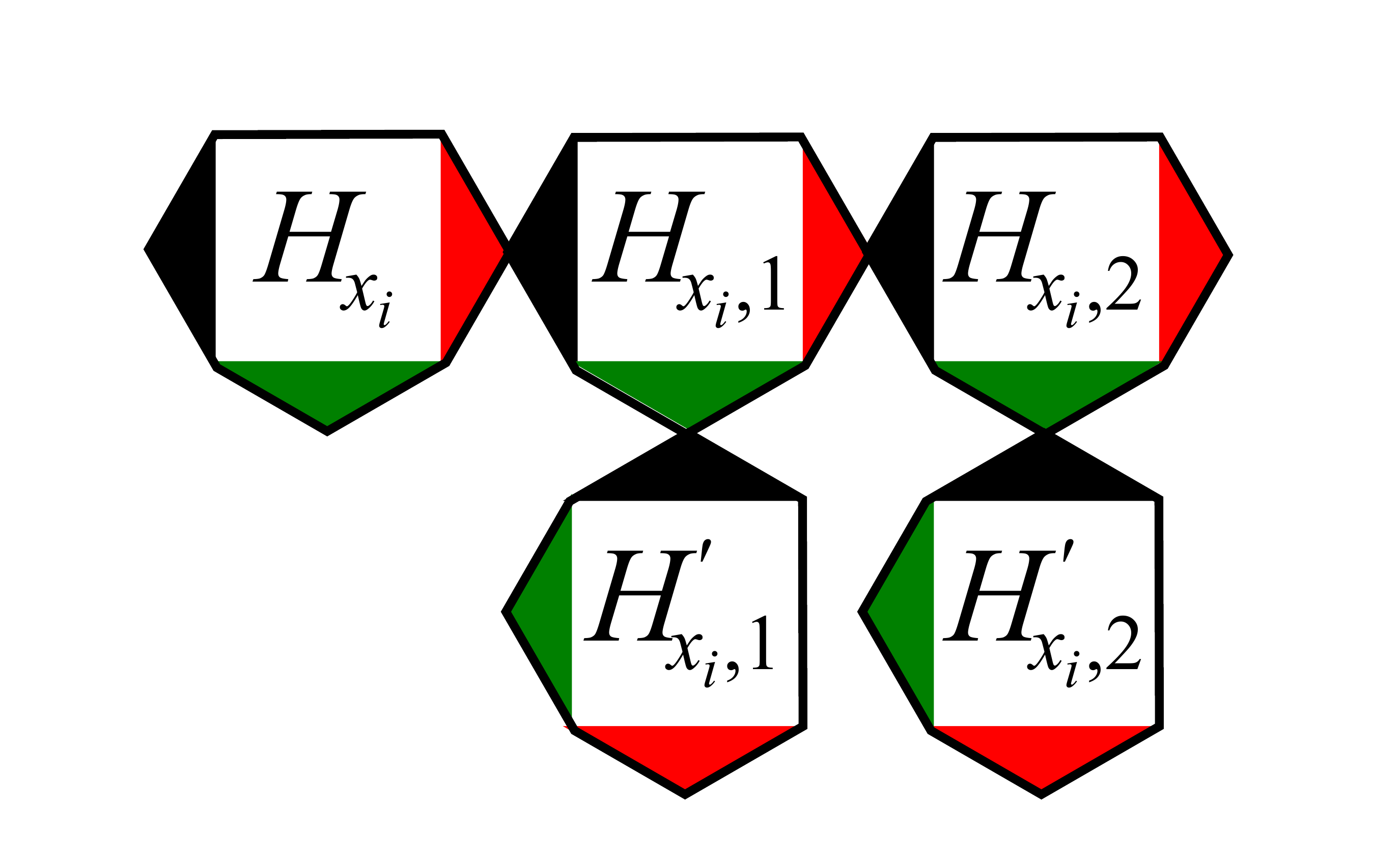}
		}
	}
	\caption{\label{fig:8sunattach}The \ROOT, \NOT, and \EQUAL attachment points of a 7-sun 	
			and how they attach.}
\end{figure}


\begin{definition}[Wire]
	A \emph{wire} is a sequence
		$H_0,H_1,\dots, H_k$ of 7-suns 
		such that for $i\in\{1,2, \ldots, k\}$,  
		$H_i$ is attached 
		at the \ROOT to $H_{i-1}$ 
		via a \NOT gadget.
\end{definition}

Observe that the preimage of a wire 
		is a set of overlapping $W_7$s and $C_7^2$s
		so that $T^{-1}(H_i) \cong T^{-1}(H_0)$
		if and only if $i$ is even.
Using a wire as a central line storing the value of a variable $x_i$
	and its complement $\overline{x}_i$,
	we build a larger structure ({\it variable cluster}) that allows connections to other variables
	via clause gadgets.

\subsection{Variable Clusters}

For each variable $x_i$, form a wire of length $2m+1$: $H_{x_i},H_{x_i,1},\dots,H_{x_i,2m}$.
For each $j \in \{1, 2, \ldots, 2m\}$, 
	attach a $7$-sun $H_{x_i,j}'$ via an \EQUAL gadget to the 
	7-sun $H_{x_i,j}$ in the wire 
	(see Figures~\ref{7SunAttachMultiple} and \ref{7SunAttachSymbolic} 
		for an example when $m = 1$). 
Note that $T^{-1}(H'_{x_i,j}) \cong T^{-1}(H_{x_i,j})$ 
	for all $j$ with $1\le j\le 2m$. 
Hence, $T^{-1}(H'_{x_i,j}) \cong T^{-1}(H_{x_i})$ 
	if and only if $j$ is even. 
Consider the value stored by $T^{-1}(H'_{x_i,j})$ 
	to be $x_i$ if $j$ is even and $\overline{x_i}$ when $j$ is odd.

Our construction of variable clusters is almost complete. 
However, we need to make these variable clusters 
	interact with each other 
	via clause gadgets 
	that allow all the satisfying cases 
	in the truth table 
	of a clause. 
To allow all true cases, the clause requires larger suns than the $7$-sun,
	but requires the suns to have only the two canonical preimages.
Hence, we must incorporate the binary-enforced $k$-sun into
	our construction for large enough $k$.
It is sufficient to have $k = 12$, as we will see in Section \ref{sec:clauses}.

\begin{definition}[Large Variable Gadget]
	Given a variable $x_i$ and integer $j \in \{1,\dots, m\}$, the \emph{large variable gadget}
		$V_{x_i,j}$ is given as a binary-enforced $12$-sun, with one of its $7$-suns
		labeled $H_{x_i,j}'$.
\end{definition}

Lemma~\ref{lma:12sun} demonstrated that a large variable gadget has exactly two preimages. 
To complete the variable cluster for $x_i$, 
	take the $2m$ large variable gadgets $V_{x_i,1},\dots,V_{x_i,2m}$ 
	and identify each $H_{x_i,j}'$ with the 
	$7$-sun having the same subscript in the the wire gadget for $x_i.$
		
\begin{definition}[Variable Clusters]
	Each variable $x_i$ is represented by 
		a \emph{variable cluster} composed of
		a wire $H_{x_i}, H_{x_i,1},\dots, H_{x_i,2m}$ of length $2m+1$,
		variable clusters $V_{x_i,1},\dots,V_{x_i,2m}$
		with the $7$-suns $H_{x_i,j}'$ attached via \EQUAL gadgets
		to $H_{x_i,j}$ in the wire.
\end{definition}

\begin{lemma}\label{lma:variablecluster}
	A variable cluster has exactly two preimages:
	
		\begin{enumerate}
		
		\item[(a)] $H_{x_i}$ has a wheel preimage while 
			$H_{x_i,j}$ has a wheel preimage when $j$ is even 
			and a squared cycle preimage when 
			$j$ is odd.
			The variable gadget $V_{x_i,j}$ has a preimage
			that contains a $12$-wheel if and only if $j$ even.  
	
		\item[(b)] $H_{x_i}$ has a squared cycle preimage while 
			$H_{x_i,j}$ is a squared cycle preimage when $j$ is even 
			and a wheel preimage when $j$ is odd.
			The variable gadget $V_{x_i,j}$ has a preimage that contains a 
				$12$-wheel if and only if $j$ is odd.
			
	\end{enumerate}
\end{lemma}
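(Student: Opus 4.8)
The plan is to show that any preimage of a variable cluster is completely determined by a single binary choice --- the preimage type of the root 7-sun $H_{x_i}$ --- and then to exhibit both resulting configurations as genuine preimages. First I would observe that each 7-sun (the root $H_{x_i}$, each $H_{x_i,j}$ in the wire, and each attached $H_{x_i,j}'$) and each binary-enforced 12-sun $V_{x_i,j}$ sits as a triangle-induced subgraph of the variable cluster, since the gadget identifications only glue along bowties and never create a triangle straddling two gadgets. Consequently, Lemmas~\ref{lma:TriangleInducedClosure} and~\ref{lma:7sun} force the preimage of every 7-sun to be either a 7-wheel $W_7$ or a squared cycle $C_7^2$, and Lemma~\ref{lma:12sun} forces each $V_{x_i,j}$ into one of its two preimages (all-wheels, containing a 12-wheel, or all-squared-cycles).

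For the uniqueness direction I would fix $T^{-1}(H_{x_i})$, which by the above is one of two types, and propagate. Since consecutive 7-suns in the wire are joined by \NOT gadgets, part~2 of Lemma~\ref{lma:EQUALandNOTgadget} gives that consecutive wire suns have non-isomorphic preimages, so by induction (equivalently, by the wire observation) $T^{-1}(H_{x_i,j}) \cong T^{-1}(H_{x_i})$ exactly when $j$ is even. Each $H_{x_i,j}'$ is attached to $H_{x_i,j}$ by an \EQUAL gadget, so part~1 of the same lemma gives $T^{-1}(H_{x_i,j}') \cong T^{-1}(H_{x_i,j})$. Finally, within each large variable gadget $V_{x_i,j}$ the two preimages supplied by Lemma~\ref{lma:12sun} are distinguished precisely by whether their constituent 7-suns (in particular $H_{x_i,j}'$) are wheels or squared cycles; hence the already-determined type of $H_{x_i,j}'$ selects which preimage $V_{x_i,j}$ takes, and in particular $V_{x_i,j}$ contains a 12-wheel if and only if $H_{x_i,j}'$ has a wheel preimage. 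Tracing the parities through then yields exactly the assignments in~(a) when $H_{x_i}$ is a wheel and in~(b) when $H_{x_i}$ is a squared cycle. Since these two choices are distinct and every other preimage type is forced by the one choice, there are at most two preimages.

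It remains to show both configurations are realizable, which I expect to be the main obstacle: the forcing argument only shows that no third preimage can exist, not that the two candidate configurations actually assemble into a valid graph. To establish existence I would build each preimage by gluing the component preimages along the edges corresponding to the shared 7-suns $H_{x_i,j}'$ --- taking the wire preimage (a chain of $W_7$'s and $C_7^2$'s overlapping at bowties, as in the wire observation) together with the matching preimage of each $V_{x_i,j}$ from Lemma~\ref{lma:12sun}. The key check is consistency of the gluing: since the \EQUAL gadget forces the wire and the 12-sun to assign the same wheel/cycle type to $H_{x_i,j}'$, their preimages agree on the shared edges, so the union is a well-defined graph; and because the gadget identifications are confined to bowties, no spurious triangle is created that would alter the triangular line graph. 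Verifying that the resulting graph $G$ satisfies that $T(G)$ is isomorphic to the variable cluster --- i.e.\ that the local preimages certified by Lemmas~\ref{lma:7sun} and~\ref{lma:12sun} patch together globally without introducing extra adjacencies --- is the heart of the argument; once it is done, the two configurations from~(a) and~(b) are exactly the two preimages.
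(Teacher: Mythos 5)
Your proposal follows the same route as the paper's proof: triangle-induced subgraph closure (Lemma~\ref{lma:TriangleInducedClosure}) together with Lemmas~\ref{lma:7sun}, \ref{lma:EQUALandNOTgadget}, and~\ref{lma:12sun} forces every component's preimage type from the single binary choice of $T^{-1}(H_{x_i})$, propagated along the wire by the \textsc{Not} gadgets and into each $V_{x_i,j}$ by the \textsc{Equal} gadget. Your third paragraph on realizability actually goes beyond the paper, which asserts ``exactly two preimages'' but only argues the forcing direction and leaves the existence of both glued configurations implicit; identifying and sketching that consistency check is a strengthening rather than a deviation.
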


\begin{proof}
	Since each $H_{x_i}$, $H_{x_i,j}$, or $H_{x_i,j}'$ 
		is a triangle-induced subgraph isomorphic to a $7$-sun,  
		its preimage is either a wheel or a 
		squared cycle by Lemmas \ref{lma:TriangleInducedClosure} and \ref{lma:7sun}.
	By the properties of the \NOT gadget (Lemma \ref{lma:EQUALandNOTgadget}), 
		the preimages of $H_{x_i,j}$ in the wire alternate
		between a wheel and a squared cycle
		depending on the preimage of $H_{x_i}$.
	Since $H_{x_i,j}'$ is connected to $H_{x_i,j}$ 
		via an \EQUAL gadget, 
		the preimage of $H_{x_i,j}'$ is isomorphic 
		to the preimage of $H_{x_i,j}.$ 
	Moreover, the preimage of $H_{x_i,j}'$  
		determines the preimage of $V_{x_i,j}$ by Lemma \ref{lma:12sun}.
\end{proof}

Figure \ref{fig:variablecluster} shows a diagram 
	of the variable cluster for $x_i$ 
	and the associated variable value correspondence.

\begin{figure}[ht]\centering
	\includegraphics[height=1.75in]{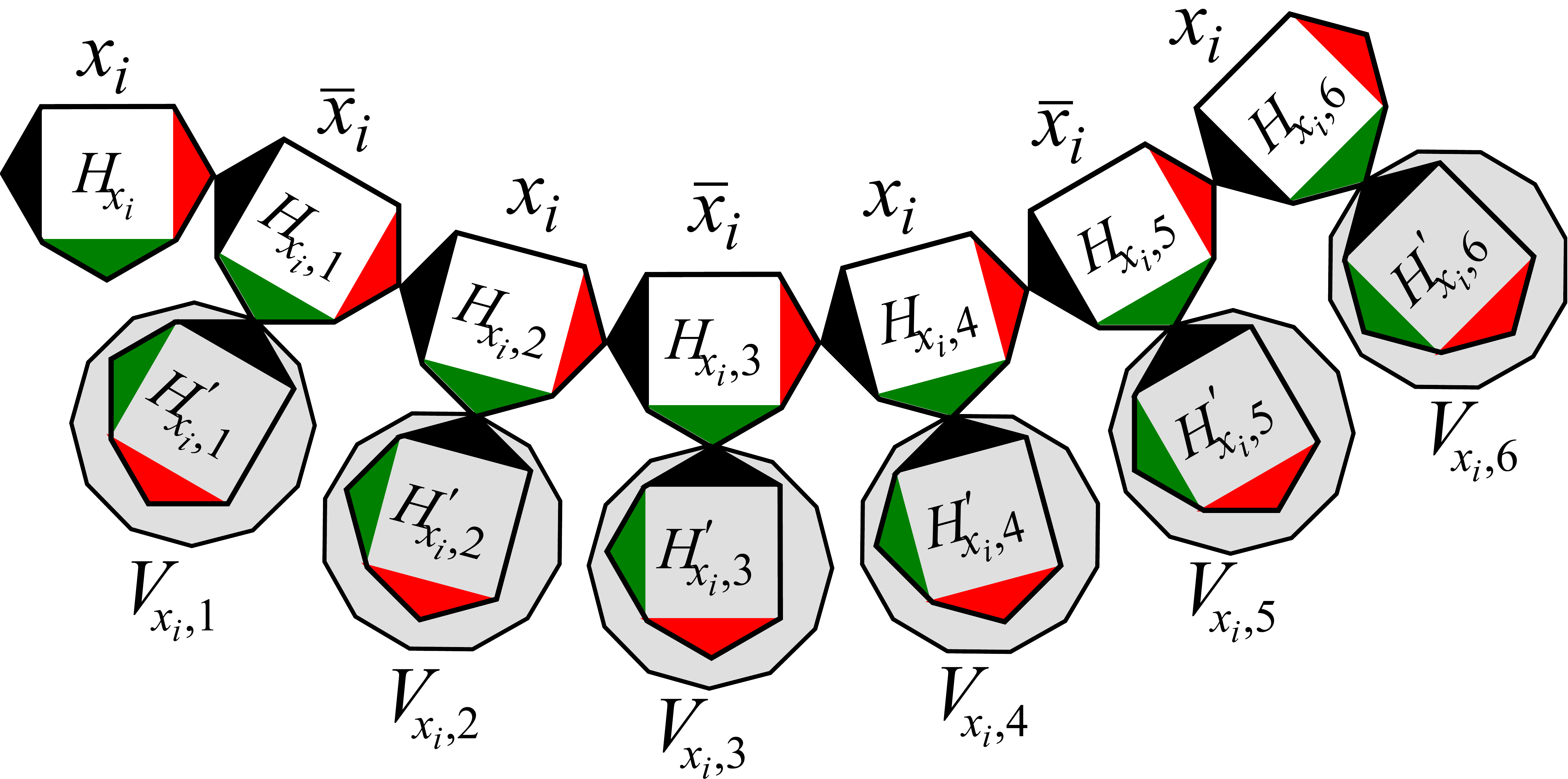}
	\caption{\label{fig:variablecluster}The variable cluster for $x_i$ with $m=3$.}
\end{figure}


\section{Enforcing Clause Satisfaction}\label{sec:clauses}

It remains to show how to encode each clause $C_j$ of $\phi$ into a graph $H$
	so that $H$ is a triangular line graph if and only if $\phi$ is satisfiable.
The method combines large variable gadgets using clause gadgets so 	
	that the large variable gadgets corresponding to the three literals in a 
	clause cannot all have wheels as preimages.
This prevents a preimage of $H$ from existing unless 
	the variable gadget preimages correspond to
	an assignment of the variables that satisfies $\phi$.

Consider three 12-suns $G_1$, $G_2$, and $G_3$. 
For $\ell\in \{1, 2, 3\}$, 
	let $\langle a_{\ell, 1}, a_{\ell, 2}, a_{\ell, 3}\rangle$ 
	and $\langle b_{\ell, 1},b_{\ell,2},b_{\ell,3}\rangle$ 
	be two triangles of maximum distance apart 
	in $G_{\ell}$ 
	where $a_{\ell,3}$ and $b_{\ell,3}$ are vertices of degree $2$ in $G_{\ell}.$ 
It follows that that are five triangles between 
	$\langle a_{\ell, 1}, a_{\ell, 2}, a_{\ell, 3}\rangle$ 
	and $\langle b_{\ell, 1},b_{\ell,2},b_{\ell,3}\rangle$ 
	in $G_{\ell}$ 
	for each $\ell\in\{1,2,3\}$ 
	and $a_{\ell, 1}, a_{\ell, 2}, b_{\ell,1}$ and $b_{\ell,2}$ 
	all lie on the $12$-cycle of $G_{\ell}.$ 
Assume that the vertices 
	on the $12$-cycle of $G_{\ell}$ 
	appear in the order $a_{\ell, 1}, a_{\ell, 2}, b_{\ell,1}, b_{\ell,2.}$		
Identify triangles by identifying the vertices
		(a) $a_{\ell,1}$ with $b_{\ell+1,1}$, 
		(b) $a_{\ell,2}$ with $b_{\ell+1,3}$, and 
		(c) $a_{\ell,3}$ with $b_{\ell+1,2}$ for $\ell \in \{1, 2, 3\}$ 
		(with subscripts taken modulo $3).$ 
This construction is shown in Figure \ref{fig:clausegadget}. 

\begin{figure}[ht]\centering
	\includegraphics[height=1.25in]{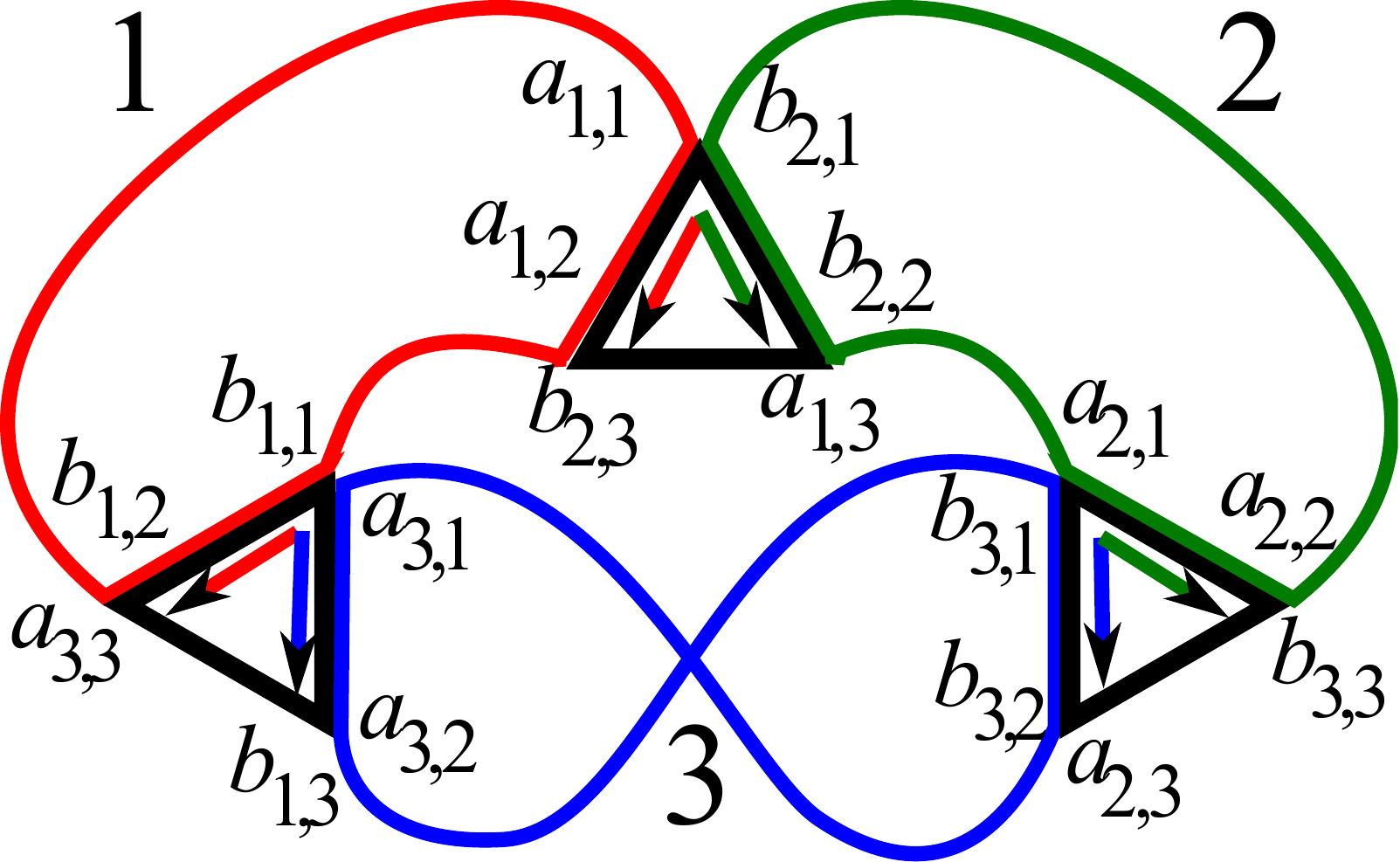}
	\caption{\label{fig:clausegadget}Twisting three 12-suns together}
\end{figure}

\begin{definition}[Clause Gadget]
	Let $C_j = u_{i_{j,1}} \vee u_{i_{j,2}} \vee u_{i_{j,3}}$ 
		be the $j$th clause in $\phi$.	
	If $u_{i_{j,\ell}}=x_{i_{j,\ell}}$, 
		set $k_\ell = 2j$,
		otherwise $u_{i_{j,\ell}} = \overline{x}_{i_{j,\ell}}$
		and set $k_\ell = 2j-1, (\ell\in\{1, 2, 3\}$.
	The \emph{clause gadget} for $C_j$ 
		joins the 12-suns within the three
		large variable gadgets $V_{x_{i_{j,1}},k_{1}}, V_{x_{i_{j,2}},k_{2}}, V_{x_{i_{j,3}},k_{3}}$
		using the construction above.
\end{definition}

The important property of this clause gadget is that 
	the possible preimages correspond to the satisfying assignments
	of a disjunction of three literals.
	
\begin{lemma}\label{lma:clause}
	Let $G_1, G_2,$ and $G_3$ be the 12-suns used in a clause gadget. If the clause gadget has a preimage, 
		then at least one of $G_1, G_2$ and $G_3$ has $C_{12}^2$ as a preimage.
\end{lemma}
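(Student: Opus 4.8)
The plan is to prove the contrapositive: assuming a preimage $P$ of the clause gadget $H$ exists (so $T(P)\cong H$), I will show the three $12$-suns cannot all receive wheel preimages. Each $G_\ell$ is the $12$-sun of a large variable gadget (a binary-enforced $12$-sun) and is a triangle-induced subgraph of $H$, so by Lemmas~\ref{lma:TriangleInducedClosure} and~\ref{lma:12sun} the restriction of $P$ to $G_\ell$ is a subgraph of $P$ that is either a $12$-wheel $W_{12}$ or a squared cycle $C_{12}^2$. Suppose for contradiction that all three restrictions are wheels, with centers $C_1,C_2,C_3$.

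The key is the wheel correspondence recorded in the proof of Lemma~\ref{lma:7sun}: in a $W_{12}$ preimage of a $12$-sun, the degree-$4$ vertices on the inner cycle correspond to the spokes (edges of $P$ meeting the center), while the degree-$2$ outer vertices correspond to rim edges (edges of $P$ missing the center). I apply this to the twisted identification $a_{\ell,1}=b_{\ell+1,1}$, $a_{\ell,2}=b_{\ell+1,3}$, $a_{\ell,3}=b_{\ell+1,2}$ gluing $G_\ell$ to $G_{\ell+1}$. Since $a_{\ell,1}$ and $b_{\ell+1,1}$ both lie on inner $12$-cycles (degree $4$), the single edge $e_\ell$ of $P$ to which they correspond is a spoke of the $\ell$th wheel \emph{and} of the $(\ell+1)$st wheel, so $e_\ell$ is incident to both $C_\ell$ and $C_{\ell+1}$. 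Moreover $C_\ell\neq C_{\ell+1}$: the identification $a_{\ell,2}=b_{\ell+1,3}$ equates a spoke of the $\ell$th wheel (incident to $C_\ell$) with a rim edge of the $(\ell+1)$st wheel (avoiding $C_{\ell+1}$), which is impossible if the two centers coincide. Hence $e_\ell$ has endpoints exactly $C_\ell$ and $C_{\ell+1}$, i.e. $C_\ell C_{\ell+1}\in E(P)$.

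Letting $\ell$ range over $\{1,2,3\}$ (indices modulo $3$) produces all three edges $C_1C_2$, $C_2C_3$, $C_3C_1$ in $P$, so $\{C_1,C_2,C_3\}$ spans a triangle of $P$. These three sides are precisely the edges corresponding to the vertices $a_{1,1},a_{2,1},a_{3,1}$, and because they are pairwise incident and lie in a common triangle of $P$, the vertices $a_{1,1},a_{2,1},a_{3,1}$ are pairwise adjacent in $T(P)$. The contradiction comes from $H$ itself: each such pair lies on the inner $12$-cycle of a single sun at distance $6$ — for instance $a_{1,1}=b_{2,1}$ and $a_{2,1}$ are the inner-cycle vertices of the two maximally-distant special triangles $\langle b_{2,1},b_{2,2},b_{2,3}\rangle$ and $\langle a_{2,1},a_{2,2},a_{2,3}\rangle$ of $G_2$, hence non-adjacent in $S_{12}$ — and no identification of the construction joins them, so they are pairwise non-adjacent in $H$. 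This contradicts $T(P)\cong H$, so not all three suns have wheel preimages, and at least one of $G_1,G_2,G_3$ has a $C_{12}^2$ preimage.

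The delicate point, and the step I would guard most carefully, is the twist analysis yielding $C_\ell C_{\ell+1}\in E(P)$: it is exactly identification (a), pairing two degree-$4$ vertices, that forces a spoke-to-spoke gluing and hence an edge between consecutive centers, while the two ``mismatched'' spoke-to-rim identifications (b) and (c) are what keep the centers distinct. Getting the correspondence between sun-vertex degrees and spoke/rim edges right, and tracking which of the three centers each glued edge meets, is where an error would most easily creep in.
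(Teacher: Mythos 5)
Your proof is correct and follows essentially the same route as the paper's: identify the inner-cycle vertices $a_{\ell,1}=b_{\ell+1,1}$ with spokes of consecutive wheels, deduce that the three wheel centers span a triangle in the preimage, and contradict the non-adjacency of $a_{1,1},a_{2,1},a_{3,1}$ in the clause gadget. In fact you supply one detail the paper leaves implicit (that the twisted identifications (b) and (c) force $C_\ell\neq C_{\ell+1}$, so the shared spoke really is the edge $C_\ell C_{\ell+1}$); the only thing you omit is the paper's accompanying remark that preimages with zero, one, or two wheels actually exist, which is not needed for the stated implication.
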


\begin{proof}
	The existence of preimages to the clause gadget
		with zero, one, or two wheels is shown by the explicit 
		construction found in Appendix \ref{apx:ClauseGadget}
		and Tables \ref{tbl:ClauseGadget}-\ref{tbl:2wheels}. 
	It remains to show that the clause gadget does not have a preimage where
		each $G_\ell$ has a wheel preimage.
	We will show that this is not a triangular line graph,
				as more adjacencies are required.
	
	Suppose for the sake of contradiction that 
		there exists a preimage of the clause gadget
		where each 12-sun has a wheel preimage.
	Since the preimage of the induced $12$-sun $G_{\ell}$ is a wheel, 
		the vertices $a_{\ell,1}$, $a_{\ell,2}$, $b_{\ell,1}$, and $b_{\ell,2}$ 
		correspond to spokes in $T^{-1}(G_{\ell})$ for $\ell\in\{1, 2, 3\}.$
	These spokes share a common endpoint which 
		is the center of the wheel $T^{-1}(G_\ell)$.
	Since $a_{1,1} = b_{2,1}$, 
		 $a_{2,1} = b_{3,1}$, and 
		$a_{3,1} = b_{1,1}$,						
			it follows that the centers of the wheels 
			$T^{-1}(G_1)$, $T^{-1}(G_2)$, and $T^{-1}(G_3)$ 
			induce a triangle in the preimage.
	However, $a_{1,1}$ is not adjacent to either 
		$a_{2,1}$ nor $a_{3,1}$ in the clause gadget, forming a contradiction.
\end{proof}

With this clause gadget, we prove the main theorem.

\begin{thm}\label{thm:NPC}
	Recognizing if a graph is a triangular line graph is NP-complete.
\end{thm}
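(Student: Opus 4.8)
The plan is to prove both that recognizing triangular line graphs lies in NP and that it is NP-hard via the reduction from 3-SAT that the gadgets have been set up to realize. Membership in NP is the easy half: given a graph $G$, a certificate is a candidate preimage $H$ together with a bijection $f : E(H) \to V(G)$. Since $|E(H)| = |V(G)|$, the certificate has polynomial size, and one can verify in polynomial time that $f$ carries the adjacency relation of $T(H)$ exactly onto that of $G$ by checking, for each pair of edges of $H$, whether they are incident and lie in a common triangle and whether the corresponding pair of vertices is adjacent in $G$. Hence the problem is in NP, and the remaining work is the hardness reduction.

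For hardness, given a 3-CNF formula $\phi$ with $n$ variables and $m$ clauses, I would assemble $G_\phi$ exactly from the pieces already constructed: one variable cluster per variable $x_i$ (a wire of length $2m+1$ carrying $2m$ large variable gadgets attached by \EQUAL gadgets) and one clause gadget per clause $C_j$ joining the three large variable gadgets $V_{x_{i_{j,1}},k_1}, V_{x_{i_{j,2}},k_2}, V_{x_{i_{j,3}},k_3}$, with $k_\ell = 2j$ for a positive literal and $k_\ell = 2j-1$ for a negative one. Each gadget has constant size and there are $O(nm)$ of them, so $G_\phi$ is built in polynomial time. The heart of the argument is then to show that $G_\phi$ is a triangular line graph if and only if $\phi$ is satisfiable, using the chain of structural lemmas established above.

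For the correctness equivalence, the key observation is the truth-value correspondence enforced by the parity bookkeeping. By Lemma \ref{lma:variablecluster}, each variable cluster is a triangle-induced subgraph with exactly two preimages, which I read as the truth value of $x_i$ (a squared cycle on $H_{x_i}$ meaning $x_i = 1$). Tracing the two cases of Lemma \ref{lma:variablecluster} together with the choice $k_\ell = 2j$ or $2j-1$ shows that the $12$-sun used in the clause gadget for the literal $u_{i_{j,\ell}}$ has a squared cycle preimage precisely when that literal is satisfied, and a wheel preimage precisely when it is falsified. Granting this dictionary, the two directions follow. If $G_\phi$ has a preimage, restricting it to each clause gadget (a triangle-induced subgraph) yields a preimage of that gadget, so by Lemma \ref{lma:clause} at least one of its three $12$-suns is a squared cycle, i.e.\ at least one literal of $C_j$ is satisfied; as this holds for every clause, the induced assignment satisfies $\phi$. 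Conversely, given a satisfying assignment, I would set each variable cluster to the preimage of Lemma \ref{lma:variablecluster} matching the truth value, which fixes each $12$-sun's preimage type, and then invoke the explicit clause-gadget preimages of Appendix \ref{apx:ClauseGadget} (Tables \ref{tbl:ClauseGadget}--\ref{tbl:2wheels}) for the zero-, one-, or two-wheel cases to realize a preimage on each clause gadget.

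The main obstacle I expect is gluing consistency: verifying that these locally prescribed preimages actually patch into a single global preimage of $G_\phi$, and dually that a global preimage really does restrict to an allowed preimage on each gadget. This rests on two points that must be checked carefully. First, each variable cluster and each clause gadget must be a genuine triangle-induced subgraph of $G_\phi$, so that Lemmas \ref{lma:TriangleInducedClosure}, \ref{lma:12sun}, \ref{lma:variablecluster}, and \ref{lma:clause} apply to the ambient graph and no spurious triangle spanning two gadgets forces extra adjacencies. Second, the preimage choices must agree on the structure shared between adjacent gadgets --- the bowties identified by \EQUAL and \NOT gadgets inside a cluster, and the end triangles identified when three $12$-suns are twisted together in a clause gadget --- so that the separate edge maps combine into one well-defined preimage of $G_\phi$. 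Once compatibility on these shared triangles is confirmed, the local preimages assemble into the desired global preimage, and the equivalence, hence NP-completeness, follows.
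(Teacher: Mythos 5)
Your proposal follows essentially the same route as the paper: the same NP certificate (a candidate preimage of polynomial size whose triangular line graph is checked against the input), the same construction of $G_\phi$ from variable clusters and clause gadgets, and the same two-directional correctness argument via Lemmas \ref{lma:TriangleInducedClosure}, \ref{lma:variablecluster}, and \ref{lma:clause} together with the explicit preimages of Appendix \ref{apx:ClauseGadget}. The gluing-consistency concern you raise at the end is a fair point, but the paper handles it only implicitly as well (by noting each gadget is a triangle-induced subgraph and exhibiting explicit compatible preimages), so your treatment matches the paper's level of detail.
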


\begin{proof}
	Given a graph $H$ on $n$ vertices $v_1,\dots,v_n$,
		non-deterministically select pairs $e_1,\dots,e_n$
		from the set $\{1,\dots, 2n\}$ 
		forming a graph $G = (\{1,\dots,2n\}, \{e_1,\dots,e_n\})$.
	Then, check that the adjacencies of $e_ie_j$ in $T(G)$
		match those of $v_iv_j$ in $H$, for each $i, j \in \{1,\dots, n\}$.
	This takes polynomial time, so the problem is in NP.
	
	To show hardness, we reduce from 3-SAT by converting
		a 3-CNF formula $\phi$ on $n$ variables and $m$ clauses
		into the graph $G_\phi$:
		
	\begin{cem}	
		\item For each variable $x_i$, $i=1, 2, \ldots, n$, 
				form a variable cluster using a wire of length $2m+1.$
		\item Combine these  $n$ variable clusters via the clause gadgets for clauses $C_1, C_2, \dots, C_m$ to obtain graph $G_\phi.$ 
	\end{cem}
		
	This process can be done in polynomial time.
	Note that each gadget is a triangle-induced subgraph of $G_\phi$
		and so Lemma \ref{lma:TriangleInducedClosure}
		restricts the preimage of $G_\phi$ to 
		include only allowed preimages of each gadget.

\def\vx{{\mathbf x}}
	We now show that $\phi$ 
		is satisfiable 
		if and only if $G_\phi$ 
			is a triangular line graph.
	Given an assignment $\vx$, 
		consider a possible preimage 
		for $G_\phi$ 
		by setting the
		variable cluster 
		for $x_i$ 
		to have $T^{-1}(H_{x_i})=W_7$ 
		if $x_i = 0$
		and $T^{-1}(H_{x_i})\cong C_7^2$ 
		if $x_i = 1$.
	The rest of the preimage of the variable cluster for $x_i$ propagates according 
		to Lemma \ref{lma:variablecluster}.
	If $\vx$ satisfies $\phi$, then every clause $C_j$
		 is simultaneously satisfied.
	Hence, the clause gadget for $C_j$ 
		has at least one satisfying variable
		whose large variable gadget
		has preimage containing the squared cycle $C_{12}^2$.
	Thus, each clause gadget has a preimage as well.
		
	On the other hand, 
		if there is a triangular line graph preimage for $G_\phi$, 
		then each clause gadget has at least one large variable gadget
		whose preimage contains the squared cycle $C_{12}^2.$
	Hence, a satisfying assignment $\vx$ can be formed by setting
		each $x_i = 1$ if and only if $T^{-1}(H_{x_i}) \cong C_{7}^2$.
\end{proof}

This shows that the general problem is NP-complete, 
	but our construction 
	shows hardness for a special class of triangular line graphs.

\begin{cor}
	Let $\mathcal{G}$ be the class of graphs $G$ where each edge in $E(G)$ 
		is in exactly one triangle in $G$.
	Deciding if a graph $G \in \mathcal{G}$ is a triangular line graph
		is NP-complete.
\end{cor}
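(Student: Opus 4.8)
The plan is to obtain the corollary directly from Theorem \ref{thm:NPC} by verifying that the reduction used there already produces graphs lying inside $\mathcal{G}$. Membership in NP is immediate: deciding whether $G \in \mathcal{G}$ is polynomial (test each edge for its triangle count), and the same nondeterministic certificate as in Theorem \ref{thm:NPC} — a candidate preimage presented as a pairing on $\{1,\dots,2n\}$, checked against the adjacencies of $G$ in polynomial time — certifies membership in the class of triangular line graphs. So the entire content is the hardness direction, and for that it suffices to show that the graph $G_\phi$ built by the reduction satisfies the property that every edge lies in exactly one triangle, i.e.\ $G_\phi \in \mathcal{G}$. Since $\phi$ is satisfiable if and only if $G_\phi$ is a triangular line graph (Theorem \ref{thm:NPC}), establishing $G_\phi \in \mathcal{G}$ turns that same reduction into a polynomial reduction from $3$-SAT to the restricted recognition problem.

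The verification that $G_\phi \in \mathcal{G}$ proceeds by first checking the base objects and then each gluing operation. First I would observe that in any $k$-sun $S_k$ every edge lies in exactly one triangle: a cycle edge $v_iv_{i+1}$ has $w_i$ as its unique common neighbor, and a spoke edge $v_iw_i$ has $v_{i+1}$ as its unique common neighbor, because the degree-$2$ vertices $w_i$ have no other neighbors. Thus each sun used in the construction — the variable $7$-suns and the $12$-suns inside the large variable gadgets — already lies in $\mathcal{G}$. It then remains to check that the vertex identifications used to assemble these suns create no new triangle on any edge; this is the only way an edge could be pushed into a second triangle, since no identification deletes an edge or isolates an edge from its existing triangle, so every edge automatically stays in at least one triangle.

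Next I would treat each gluing in turn. The \EQUAL and \NOT gadgets identify a bowtie $B$ of one $7$-sun with a bowtie $B'$ of another; since a bowtie is a pair of sun-triangles meeting at the center vertex, this merges two pairs of triangles into one pair, and I would check that the merged vertices acquire no neighbor adjacent to one of their previous neighbors, so no spurious triangle appears. The binary-enforcement step embeds auxiliary $7$-suns that share the base sun-triangles along the $12$-cycle; because these are genuinely shared triangles rather than duplicated ones, and the bridging vertices $p,q$ of distinct embedded $7$-suns are distinct, no edge gains a second triangle. The clause gadget identifies, for each consecutive pair of the three $12$-suns, a sun-triangle $\langle a_{\ell,1},a_{\ell,2},a_{\ell,3}\rangle$ with another $\langle b_{\ell+1,1},b_{\ell+1,2},b_{\ell+1,3}\rangle$, twisting a degree-$2$ vertex against a $12$-cycle vertex; here I would confirm that the edges of the common triangle coincide and that a merged vertex's neighbors inherited from the two different suns are pairwise non-adjacent.

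I expect the clause gadget to be the main obstacle. Its identifications twist a degree-$2$ sun vertex of one $12$-sun onto a cycle vertex of the next, so a merged vertex genuinely inherits neighbors from both suns, and one must rule out that any such pair of inherited neighbors is already adjacent. The hypotheses that the two identified triangles of each $12$-sun are at maximum distance and that $a_{\ell,3},b_{\ell,3}$ have degree $2$ are exactly what force the relevant neighborhoods into disjoint parts of the respective suns, hence to be non-adjacent; carrying out this case analysis carefully is the crux. Once all three gluings are shown to preserve the property, every edge of $G_\phi$ lies in exactly one triangle, so $G_\phi \in \mathcal{G}$, and the reduction of Theorem \ref{thm:NPC} establishes NP-hardness, hence NP-completeness, of recognition within $\mathcal{G}$.
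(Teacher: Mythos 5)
Your proposal is correct and follows exactly the paper's route: reuse the reduction of Theorem~\ref{thm:NPC} and observe that $G_\phi$ already has every edge in a unique triangle, hence lies in $\mathcal{G}$. The only difference is that you spell out the gadget-by-gadget verification of $G_\phi \in \mathcal{G}$, which the paper simply asserts in one sentence.
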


\begin{proof}
	The construction of Theorem \ref{thm:NPC} produced a graph $G_{\phi}$ 
		which is a triangular line graph if and only if $\phi$
		is a satisfiable 3-CNF formula.
	This $G_\phi$ has every edge in a unique triangle, so $G_\phi$ is in the class 
	$\mathcal{G}$, and the above reductions works for this problem.
\end{proof}

This corollary is interesting since if a preimage exists in this class,
	it is immediately clear which edges form triangles
	and which subgraphs are triangle-induced subgraphs.
In this class, preimages have no cliques of size four.


\section{Conclusion}

The hardness of recognizing triangular line graphs is perhaps surprising 
	given the polynomial-time algorithms which recognize line graphs.
This hardness
	 is consistent 
	with the complexity of Le's characterization, 
	which is unlikely to be substantially 
	simplified for triangular line graphs.
It is interesting to consider Gallai graphs, 
	which are constructed
	from line graphs and triangular line graphs 
	by $\Gamma(G) = L(G) - E(T(G))$.
Even though these graphs are closely related to triangular line graphs,
	the constructions and proofs in this work 
	do not immediately generalize to show 
	the hardness of recognizing these graphs.
The complexity for other generalizations, such as $k$-Gallai and $k$-anti-Gallai graphs,
	or $k$-in-$\ell$ graphs,
	remains unknown.


\bibliographystyle{alpha}
\bibliography{sources.bib}

\begin{thebibliography}{AEGM10b}

\bibitem[AEGM10a]{aegm2}
Pranav Anand, Henry Escuadro, Ralucca Gera, and Craig Martell.
\newblock The structure of the iterated triangular line graphs of complete
  graphs, and its use in subtle word disambiguation cases.
\newblock 2010.
\newblock submitted.

\bibitem[AEGM10b]{aegm1}
Pranav Anand, Henry Escuadro, Ralucca Gera, and Craig Martell.
\newblock Triangular line graphs and word sense disambiguation.
\newblock 2010.
\newblock submitted.

\bibitem[CGS95]{ConvergentSequencesOfIteratedHLineGraphs}
Gary Chartrand, Heather Gavlas, and Michelle Schultz.
\newblock Convergent sequences of iterated {$H$}-line graphs.
\newblock {\em Discrete Math.}, 147(1-3):73--86, 1995.

\bibitem[Dor96]{ConvergenceOfSequencesOfIteratedTriangularLineGraphs}
David Dorrough.
\newblock Convergence of sequences of iterated triangular line graphs.
\newblock {\em Discrete Math.}, 161(1-3):79--86, 1996.

\bibitem[Gal67]{gallai}
T.~Gallai.
\newblock Transitive orientierbare {G}raphen.
\newblock {\em Acta Math. Acad. Sci. Hung.}, 18:25--66, 1967.

\bibitem[GJ90]{GareyJohnson}
Michael~R. Garey and David~S. Johnson.
\newblock {\em Computers and Intractability; A Guide to the Theory of
  {NP}-Completeness}.
\newblock W. H. Freeman \& Co., New York, NY, USA, 1990.

\bibitem[Jar95]{OnIteratedTriangularLineGraphs}
Elzbieta~B. Jarrett.
\newblock On iterated triangular line graphs.
\newblock In {\em Graph theory, combinatorics, and algorithms, {V}ol.\ 1, 2
  ({K}alamazoo, {MI}, 1992)}, Wiley-Intersci. Publ., pages 589--599. Wiley, New
  York, 1995.

\bibitem[Kra43]{krausz43}
J.~Krausz.
\newblock D{\'e}monstration nouvelle d'une th{\'e}or{\`e}me de {W}hitney sur
  les r{\'e}seaux.
\newblock {\em Mat. Fiz. Lapok}, 50:75--85, 1943.

\bibitem[Le96]{GallaiGraphsAndAntiGallaiGraphs}
Van~Bang Le.
\newblock Gallai graphs and anti-{G}allai graphs.
\newblock {\em Discrete Mathematics}, 159(1-3):179--189, 1996.

\bibitem[Leh74]{lehot74}
P.~Lehot.
\newblock An optimal algorithm to detect a line graph and output its root
  graph.
\newblock {\em J. ACM}, 21(4):569--575, 1974.

\bibitem[Pri95]{GraphDynamics}
Erich Prisner.
\newblock {\em Graph Dynamics}, volume 338 of {\em Pitman Research Notes in
  Mathematics}.
\newblock Addison Wesley Longman, Inc., 1995.

\bibitem[Rou73]{roussopoulos73}
N.~D. Roussopoulos.
\newblock A max {m,n} algorithm for determining the graph {$H$} from its line
  graph {$G$}.
\newblock {\em Information Processing Letters}, 2(4):108--112, 1973.

\bibitem[SRV07]{GallaiAndAntiGallaiGraphsOfAGraph}
Aparna~Lakshmanan S., S.~B. Rao, and A.~Vijayakumar.
\newblock Gallai and anti-{G}allai graphs of a graph.
\newblock {\em Mathematica Bohemica}, 132(1):43--54, 2007.

\end{thebibliography}

\clearpage
\appendix

\section{Clause Gadgets and Preimages}\label{apx:ClauseGadget}

To construct the clause gadget explicitly, we must first describe three 12-suns, $S_1, S_2, S_3$.
Each $S_\ell$ has 24 vertices, $S_{\ell,i}$ for $i \in \{0,\dots,23\}$.
The first index of $S_{\ell,i}$ is modulo three while the second index is modulo 24.
The even $i$ vertices $S_{\ell,i}$ are in the 12-cycle and are adjacent to vertices $S_{\ell,j}$ where
	$j \in \{i-2, i-1, i+1, i+2\}$.
The odd $i$ vertices have degree two and are adjacent to vertices $S_{\ell,j}$ where 
	$j \in \{i-1,i+1\}$.
	
The triangles in $S_\ell$ used for the clause gadget are given by 
	\begin{align*}
		a_{\ell,1} &= S_{\ell,0}     &    b_{\ell,1} &= S_{\ell,12}\\
		a_{\ell,2} &= S_{\ell,2}     &    b_{\ell,2} &= S_{\ell,14}\\
		a_{\ell,3} &= S_{\ell,1}     &    b_{\ell,3} &= S_{\ell,13}
	\end{align*}
	
Then, by merging the $a$-triangle in $S_\ell$ to the $b$-triangle in $S_{\ell+1}$, 
	the following vertices are identified by equality:
	
	\begin{align*}
		S_{\ell,0} = a_{\ell,1} &= b_{\ell+1,1} = S_{\ell+1,12}\\
		S_{\ell,2} = a_{\ell,2} &= b_{\ell+1,3} = S_{\ell+1,13}\\
		S_{\ell,1} = a_{\ell,3} &= b_{\ell+1,2} = S_{\ell+1,14}\\
	\end{align*}	

The full adjacency list for the clause gadget given by these $S_1, S_2, S_3$ is given
	in Table \ref{tbl:ClauseGadget} on page \pageref{tbl:ClauseGadget}.
Preimages for the clause gadget are given in the following tables.
Each of these preimages labels the adjacencies with the corresponding vertex
	in the clause gadget that edge represents.
\begin{itemize}
	\item Table \ref{tbl:0wheels} on page \pageref{tbl:0wheels} shows the preimage with zero wheels and three cycles.
	\item Table \ref{tbl:1wheel} on page \pageref{tbl:1wheel} shows the preimage with one wheel and two cycles.
	\item Table \ref{tbl:2wheels} on page \pageref{tbl:2wheels} shows the preimage with two wheels and one cycle.
\end{itemize}	

\begin{table}[p]
\small
\centering
\mbox{

\renewcommand{\arraystretch}{1.25}
\begin{tabular}[h]{c|lllll}
	 Vertex & \multicolumn{5}{c}{Adjacencies}\\
\hline 
	  $S_{1,0\equiv2,12}$ & 
	 $S_{1,1\equiv2,14}$ &
	 $S_{1,23}$ &
	 $S_{2,10}$ &
	 $S_{2,11}$ &
\\ &
	 $S_{1,22}$ &
	 $S_{1,2\equiv2,13}$ &\\

\hline 
	  $S_{1,1\equiv2,14}$ & 
	 $S_{1,0\equiv2,12}$ &
	 $S_{1,2\equiv2,13}$ &
	 $S_{2,16}$ &
	 $S_{2,15}$ &
\\
\hline 
	  $S_{1,2\equiv2,13}$ & 
	 $S_{1,0\equiv2,12}$ &
	 $S_{1,1\equiv2,14}$ &
	 $S_{1,4}$ &
	 $S_{1,3}$ &
\\
\hline 
	  $S_{1,3}$ & 
	 $S_{1,2\equiv2,13}$ &
	 $S_{1,4}$ &
\\
\hline 
	  $S_{1,4}$ & 
	 $S_{1,2\equiv2,13}$ &
	 $S_{1,6}$ &
	 $S_{1,5}$ &
	 $S_{1,3}$ &
\\
\hline 
	  $S_{1,5}$ & 
	 $S_{1,6}$ &
	 $S_{1,4}$ &
\\
\hline 
	  $S_{1,6}$ & 
	 $S_{1,8}$ &
	 $S_{1,7}$ &
	 $S_{1,5}$ &
	 $S_{1,4}$ &
\\
\hline 
	  $S_{1,7}$ & 
	 $S_{1,8}$ &
	 $S_{1,6}$ &
\\
\hline 
	  $S_{1,8}$ & 
	 $S_{1,7}$ &
	 $S_{1,6}$ &
	 $S_{1,10}$ &
	 $S_{1,9}$ &
\\
\hline 
	  $S_{1,9}$ & 
	 $S_{1,8}$ &
	 $S_{1,10}$ &
\\

\hline 
	  $S_{1,10}$ & 
	 $S_{3,0\equiv1,12}$ &
	 $S_{1,8}$ &
	 $S_{1,11}$ &
	 $S_{1,9}$ &
\\
\hline 
	  $S_{1,11}$ & 
	 $S_{3,0\equiv1,12}$ &
	 $S_{1,10}$ &
\\
\hline 
	  $S_{1,15}$ & 
	 $S_{3,1\equiv1,14}$ &
	 $S_{1,16}$ &
\\
\hline 
	  $S_{1,16}$ & 
	 $S_{3,1\equiv1,14}$ &
	 $S_{1,18}$ &
	 $S_{1,17}$ &
	 $S_{1,15}$ &
\\
\hline 
	  $S_{1,17}$ & 
	 $S_{1,18}$ &
	 $S_{1,16}$ &
\\
\hline 
	  $S_{1,18}$ & 
	 $S_{1,20}$ &
	 $S_{1,17}$ &
	 $S_{1,16}$ &
	 $S_{1,19}$ &
\\
\hline 
	  $S_{1,19}$ & 
	 $S_{1,20}$ &
	 $S_{1,18}$ &
\\
\hline 
	  $S_{1,20}$ & 
	 $S_{1,19}$ &
	 $S_{1,21}$ &
	 $S_{1,18}$ &
	 $S_{1,22}$ &
\\
\hline 
	  $S_{1,21}$ & 
	 $S_{1,20}$ &
	 $S_{1,22}$ &
\\
\hline 
	  $S_{1,22}$ & 
	 $S_{1,0\equiv2,12}$ &
	 $S_{1,20}$ &
	 $S_{1,21}$ &
	 $S_{1,23}$ &
\\
\hline 
	  $S_{1,23}$ & 
	 $S_{1,0\equiv2,12}$ &
	 $S_{1,22}$ &
\\
\hline 
	  $S_{2,0\equiv3,12}$ & 
	 $S_{2,1\equiv3,14}$ &
	 $S_{2,22}$ &
	 $S_{2,23}$ &
	 $S_{3,10}$ &
\\ &
	 $S_{2,2\equiv3,13}$ &
	 $S_{3,11}$ &
\\

\hline 
	  $S_{2,1\equiv3,14}$ & 
	 $S_{2,0\equiv3,12}$ &
	 $S_{2,2\equiv3,13}$ &
	 $S_{3,15}$ &
	 $S_{3,16}$ &
\\

\hline 
	  $S_{2,2\equiv3,13}$ & 
	 $S_{2,0\equiv3,12}$ &
	 $S_{2,1\equiv3,14}$ &
	 $S_{2,3}$ &
	 $S_{2,4}$ &
\\
\hline 
	  $S_{2,3}$ & 
	 $S_{2,2\equiv3,13}$ &
	 $S_{2,4}$ &
\\
\hline 
	  $S_{2,4}$ & 
	 $S_{2,2\equiv3,13}$ &
	 $S_{2,3}$ &
	 $S_{2,5}$ &
	 $S_{2,6}$ &
\\
\hline 
	  $S_{2,5}$ & 
	 $S_{2,4}$ &
	 $S_{2,6}$ &
\\
\hline 
	  $S_{2,6}$ & 
	 $S_{2,4}$ &
	 $S_{2,5}$ &
	 $S_{2,7}$ &
	 $S_{2,8}$ &
\\
\hline 
	  $S_{2,7}$ & 
	 $S_{2,6}$ &
	 $S_{2,8}$ &
\\
\hline 
	  $S_{2,8}$ & 
	 $S_{2,10}$ &
	 $S_{2,6}$ &
	 $S_{2,7}$ &
	 $S_{2,9}$ &
\\
\hline 
	  $S_{2,9}$ & 
	 $S_{2,10}$ &
	 $S_{2,8}$ &
\\

\end{tabular}

\quad
\renewcommand{\arraystretch}{1.25}
\begin{tabular}[h]{c|lllll}

	 Vertex & \multicolumn{5}{c}{Adjacencies}\\

\hline 
	  $S_{2,10}$ & 
	 $S_{1,0\equiv2,12}$ &
	 $S_{2,11}$ &
	 $S_{2,8}$ &
	 $S_{2,9}$ &
\\
\hline 
	  $S_{2,11}$ & 
	 $S_{1,0\equiv2,12}$ &
	 $S_{2,10}$ &
\\

\hline 
	  $S_{2,15}$ & 
	 $S_{1,1\equiv2,14}$ &
	 $S_{2,16}$ &
\\
\hline 
	  $S_{2,16}$ & 
	 $S_{1,1\equiv2,14}$ &
	 $S_{2,18}$ &
	 $S_{2,17}$ &
	 $S_{2,15}$ &
\\
\hline 
	  $S_{2,17}$ & 
	 $S_{2,18}$ &
	 $S_{2,16}$ &
\\
\hline 
	  $S_{2,18}$ & 
	 $S_{2,20}$ &
	 $S_{2,16}$ &
	 $S_{2,17}$ &
	 $S_{2,19}$ &
\\
\hline 
	  $S_{2,19}$ & 
	 $S_{2,18}$ &
	 $S_{2,20}$ &
\\\hline 
	  $S_{2,20}$ & 
	 $S_{2,22}$ &
	 $S_{2,21}$ &
	 $S_{2,19}$ &
	 $S_{2,18}$ &
\\
\hline 
	  $S_{2,21}$ & 
	 $S_{2,22}$ &
	 $S_{2,20}$ &
\\
\hline 
	  $S_{2,22}$ & 
	 $S_{2,0\equiv3,12}$ &
	 $S_{2,21}$ &
	 $S_{2,20}$ &
	 $S_{2,23}$ &
\\
\hline 
	  $S_{2,23}$ & 
	 $S_{2,0\equiv3,12}$ &
	 $S_{2,22}$ &
\\

\hline 
	  $S_{3,0\equiv1,12}$ & 
	 $S_{3,1\equiv1,14}$ &
	 $S_{3,2\equiv1,13}$ &
	 $S_{1,11}$ &
	 $S_{1,10}$ &
\\ &
	 $S_{3,22}$ &
	 $S_{3,23}$ &
\\

\hline 
	  $S_{3,1\equiv1,14}$ & 
	 $S_{3,0\equiv1,12}$ &
	 $S_{3,2\equiv1,13}$ &
	 $S_{1,16}$ &
	 $S_{1,15}$ &
\\

\hline 
	  $S_{3,2\equiv1,13}$ & 
	 $S_{3,1\equiv1,14}$ &
	 $S_{3,0\equiv1,12}$ &
	 $S_{3,4}$ &
	 $S_{3,3}$ &
\\
\hline 
	  $S_{3,3}$ & 
	 $S_{3,2\equiv1,13}$ &
	 $S_{3,4}$ &
\\
\hline 
	  $S_{3,4}$ & 
	 $S_{3,2\equiv1,13}$ &
	 $S_{3,5}$ &
	 $S_{3,6}$ &
	 $S_{3,3}$ &
\\
\hline 
	  $S_{3,5}$ & 
	 $S_{3,4}$ &
	 $S_{3,6}$ &
\\
\hline 
	  $S_{3,6}$ & 
	 $S_{3,5}$ &
	 $S_{3,4}$ &
	 $S_{3,7}$ &
	 $S_{3,8}$ &
\\
\hline 
	  $S_{3,7}$ & 
	 $S_{3,6}$ &
	 $S_{3,8}$ &
\\
\hline 
	  $S_{3,8}$ & 
	 $S_{3,10}$ &
	 $S_{3,7}$ &
	 $S_{3,6}$ &
	 $S_{3,9}$ &
\\
\hline 
	  $S_{3,9}$ & 
	 $S_{3,10}$ &
	 $S_{3,8}$ &

\\

\hline 
	  $S_{3,10}$ & 
	 $S_{2,0\equiv3,12}$ &
	 $S_{3,11}$ &
	 $S_{3,8}$ &
	 $S_{3,9}$ &
\\
\hline 
	  $S_{3,11}$ & 
	 $S_{2,0\equiv3,12}$ &
	 $S_{3,10}$ &
\\

\hline 
	  $S_{3,15}$ & 
	 $S_{2,1\equiv3,14}$ &
	 $S_{3,16}$ &
\\
\hline 
	  $S_{3,16}$ & 
	 $S_{2,1\equiv3,14}$ &
	 $S_{3,18}$ &
	 $S_{3,15}$ &
	 $S_{3,17}$ &
\\
\hline 
	  $S_{3,17}$ & 
	 $S_{3,18}$ &
	 $S_{3,16}$ &
\\
\hline 
	  $S_{3,18}$ & 
	 $S_{3,20}$ &
	 $S_{3,17}$ &
	 $S_{3,16}$ &
	 $S_{3,19}$ &
\\
\hline 
	  $S_{3,19}$ & 
	 $S_{3,18}$ &
	 $S_{3,20}$ &
\\\hline 
	  $S_{3,20}$ & 
	 $S_{3,18}$ &
	 $S_{3,21}$ &
	 $S_{3,22}$ &
	 $S_{3,19}$ &
\\
\hline 
	  $S_{3,21}$ & 
	 $S_{3,20}$ &
	 $S_{3,22}$ &
\\
\hline 
	  $S_{3,22}$ & 
	 $S_{3,0\equiv1,12}$ &
	 $S_{3,20}$ &
	 $S_{3,21}$ &
	 $S_{3,23}$ &
\\
\hline 
	  $S_{3,23}$ & 
	 $S_{3,0\equiv1,12}$ &
	 $S_{3,22}$ &

\end{tabular}

}
\caption{\label{tbl:ClauseGadget}The vertex-labeled clause gadget}
\end{table}

\begin{table}[p]
\small
\centering

\begin{tabular}[h]{c|rlrlrlrlrlr}
	 Vertex & \multicolumn{11}{c}{Adjacencies (Labels)}\\
\hline 
	  $0$ & 
	 $1$ & ($S_{1,1\equiv2,14}$) & 
	 $5$ & ($S_{1,11}$) & 
	 $6$ & ($S_{1,22}$) & 
	 $7$ & ($S_{1,0\equiv2,12}$) & 
	 $14$ & ($S_{2,5}$) & 
\\ &
	 $19$ & ($S_{2,4}$) & 
	 $20$ & ($S_{2,6}$) & 
\\
\hline 
	  $1$ & 
	 $0$ & ($S_{1,1\equiv2,14}$) & 
	 $2$ & ($S_{1,3}$) & 
	 $7$ & ($S_{1,0\equiv2,12}$) & 
	 $8$ & ($S_{1,2\equiv2,13}$) & 
	 $15$ & ($S_{2,8}$) & 
\\ &
	 $16$ & ($S_{2,9}$) & 
	 $20$ & ($S_{2,7}$) & 
\\
\hline 
	  $2$ & 
	 $8$ & ($S_{1,2\equiv2,13}$) & 
	 $1$ & ($S_{1,3}$) & 
	 $3$ & ($S_{1,5}$) & 
	 $9$ & ($S_{1,4}$) & 
\\
\hline 
	  $3$ & 
	 $2$ & ($S_{1,5}$) & 
	 $4$ & ($S_{1,7}$) & 
	 $9$ & ($S_{1,4}$) & 
	 $10$ & ($S_{1,6}$) & 
	 $21$ & ($S_{3,1\equiv1,14}$) & 
\\ &
	 $24$ & ($S_{3,0\equiv1,12}$) & 
\\
\hline 
	  $4$ & 
	 $3$ & ($S_{1,7}$) & 
	 $5$ & ($S_{1,9}$) & 
	 $10$ & ($S_{1,6}$) & 
	 $11$ & ($S_{1,8}$) & 
	 $22$ & ($S_{3,5}$) & 
\\ &
	 $25$ & ($S_{3,4}$) & 
\\
\hline 
	  $5$ & 
	 $0$ & ($S_{1,11}$) & 
	 $11$ & ($S_{1,8}$) & 
	 $4$ & ($S_{1,9}$) & 
	 $6$ & ($S_{1,10}$) & 
\\
\hline 
	  $6$ & 
	 $0$ & ($S_{1,22}$) & 
	 $11$ & ($S_{1,9}$) & 
	 $5$ & ($S_{1,10}$) & 
	 $7$ & ($S_{1,23}$) & 
\\
\hline 
	  $7$ & 
	 $0$ & ($S_{1,0\equiv2,12}$) & 
	 $1$ & ($S_{1,0\equiv2,12}$) & 
	 $6$ & ($S_{1,23}$) & 
	 $8$ & ($S_{1,1\equiv2,14}$) & 
	 $15$ & ($S_{2,9}$) & 
\\ &
	 $20$ & ($S_{2,6}$) & 
\\
\hline 
	  $8$ & 
	 $1$ & ($S_{1,2\equiv2,13}$) & 
	 $2$ & ($S_{1,2\equiv2,13}$) & 
	 $9$ & ($S_{1,3}$) & 
	 $7$ & ($S_{1,1\equiv2,14}$) & 
\\
\hline 
	  $9$ & 
	 $8$ & ($S_{1,3}$) & 
	 $2$ & ($S_{1,4}$) & 
	 $3$ & ($S_{1,4}$) & 
	 $10$ & ($S_{1,5}$) & 
\\
\hline 
	  $10$ & 
	 $3$ & ($S_{1,6}$) & 
	 $4$ & ($S_{1,6}$) & 
	 $9$ & ($S_{1,5}$) & 
	 $11$ & ($S_{1,7}$) & 
	 $24$ & ($S_{3,1\equiv1,14}$) & 
\\ &
	 $25$ & ($S_{3,3}$) & 
\\
\hline 
	  $11$ & 
	 $10$ & ($S_{1,7}$) & 
	 $4$ & ($S_{1,8}$) & 
	 $5$ & ($S_{1,8}$) & 
	 $6$ & ($S_{1,9}$) & 
\\
\hline 
	  $12$ & 
	 $13$ & ($S_{2,1\equiv3,14}$) & 
	 $15$ & ($S_{2,11}$) & 
	 $16$ & ($S_{2,22}$) & 
	 $17$ & ($S_{2,0\equiv3,12}$) & 
	 $18$ & ($S_{3,9}$) & 
\\ &
	 $22$ & ($S_{3,7}$) & 
	 $26$ & ($S_{3,6}$) & 
\\
\hline 
	  $13$ & 
	 $12$ & ($S_{2,1\equiv3,14}$) & 
	 $14$ & ($S_{2,3}$) & 
	 $17$ & ($S_{2,0\equiv3,12}$) & 
	 $18$ & ($S_{2,2\equiv3,13}$) & 
	 $23$ & ($S_{3,9}$) & 
\\ &
	 $26$ & ($S_{3,7}$) & 
\\
\hline 
	  $14$ & 
	 $0$ & ($S_{2,5}$) & 
	 $18$ & ($S_{2,2\equiv3,13}$) & 
	 $19$ & ($S_{2,4}$) & 
	 $13$ & ($S_{2,3}$) & 
\\
\hline 
	  $15$ & 
	 $16$ & ($S_{2,10}$) & 
	 $1$ & ($S_{2,8}$) & 
	 $12$ & ($S_{2,11}$) & 
	 $7$ & ($S_{2,9}$) & 
\\
\hline 
	  $16$ & 
	 $1$ & ($S_{2,9}$) & 
	 $12$ & ($S_{2,22}$) & 
	 $17$ & ($S_{2,23}$) & 
	 $15$ & ($S_{2,10}$) & 
\\
\hline 
	  $17$ & 
	 $16$ & ($S_{2,23}$) & 
	 $18$ & ($S_{2,1\equiv3,14}$) & 
	 $12$ & ($S_{2,0\equiv3,12}$) & 
	 $13$ & ($S_{2,0\equiv3,12}$) & 
\\
\hline 
	  $18$ & 
	 $12$ & ($S_{3,9}$) & 
	 $13$ & ($S_{2,2\equiv3,13}$) & 
	 $14$ & ($S_{2,2\equiv3,13}$) & 
	 $17$ & ($S_{2,1\equiv3,14}$) & 
	 $19$ & ($S_{2,3}$) & 
\\ &
	 $21$ & ($S_{3,11}$) & 
	 $23$ & ($S_{3,10}$) & 
\\
\hline 
	  $19$ & 
	 $0$ & ($S_{2,4}$) & 
	 $18$ & ($S_{2,3}$) & 
	 $20$ & ($S_{2,5}$) & 
	 $14$ & ($S_{2,4}$) & 
\\
\hline 
	  $20$ & 
	 $0$ & ($S_{2,6}$) & 
	 $1$ & ($S_{2,7}$) & 
	 $19$ & ($S_{2,5}$) & 
	 $7$ & ($S_{2,6}$) & 
\\
\hline 
	  $21$ & 
	 $24$ & ($S_{3,0\equiv1,12}$) & 
	 $18$ & ($S_{3,11}$) & 
	 $3$ & ($S_{3,1\equiv1,14}$) & 
	 $23$ & ($S_{3,22}$) & 
\\
\hline 
	  $22$ & 
	 $12$ & ($S_{3,7}$) & 
	 $26$ & ($S_{3,6}$) & 
	 $4$ & ($S_{3,5}$) & 
	 $25$ & ($S_{3,4}$) & 
\\
\hline 
	  $23$ & 
	 $24$ & ($S_{3,23}$) & 
	 $18$ & ($S_{3,10}$) & 
	 $13$ & ($S_{3,9}$) & 
	 $21$ & ($S_{3,22}$) & 
\\
\hline 
	  $24$ & 
	 $10$ & ($S_{3,1\equiv1,14}$) & 
	 $3$ & ($S_{3,0\equiv1,12}$) & 
	 $21$ & ($S_{3,0\equiv1,12}$) & 
	 $23$ & ($S_{3,23}$) & 
\\
\hline 
	  $25$ & 
	 $10$ & ($S_{3,3}$) & 
	 $4$ & ($S_{3,4}$) & 
	 $26$ & ($S_{3,5}$) & 
	 $22$ & ($S_{3,4}$) & 
\\
\hline 
	  $26$ & 
	 $25$ & ($S_{3,5}$) & 
	 $12$ & ($S_{3,6}$) & 
	 $13$ & ($S_{3,7}$) & 
	 $22$ & ($S_{3,6}$) & 
\\
 \end{tabular}

\caption{\label{tbl:0wheels}The edge-labeled preimage of a clause with zero wheels}
\end{table}

\begin{table}[p]
\small
\centering

\begin{tabular}[h]{c|rlrlrlrlrlr}
	Vertex & \multicolumn{11}{c}{Adjacencies (Labels)}\\
\hline 
	  $0$ & 
	 $1$ & ($S_{1,0\equiv2,12}$) & 
	 $2$ & ($S_{1,2\equiv2,13}$) & 
	 $3$ & ($S_{1,4}$) & 
	 $4$ & ($S_{1,6}$) & 
	 $5$ & ($S_{1,8}$) & 
\\ &
	 $6$ & ($S_{1,10}$) & 
	 $7$ & ($S_{1,12}$) & 
	 $8$ & ($S_{1,14}$) & 
	 $9$ & ($S_{1,16}$) & 
	 $10$ & ($S_{1,18}$) & 
\\ &
	 $11$ & ($S_{1,20}$) & 
	 $12$ & ($S_{1,22}$) & 
	 $16$ & ($S_{2,9}$) & 
	 $21$ & ($S_{2,8}$) & 
	 $22$ & ($S_{3,3}$) & 
\\ &
	 $25$ & ($S_{3,2\equiv1,13}$) & 
\\
\hline 
	  $1$ & 
	 $0$ & ($S_{1,0\equiv2,12}$) & 
	 $20$ & ($S_{2,4}$) & 
	 $2$ & ($S_{1,1\equiv2,14}$) & 
	 $12$ & ($S_{1,23}$) & 
	 $15$ & ($S_{2,5}$) & 
\\
\hline 
	  $2$ & 
	 $0$ & ($S_{1,2\equiv2,13}$) & 
	 $1$ & ($S_{1,1\equiv2,14}$) & 
	 $3$ & ($S_{1,3}$) & 
	 $20$ & ($S_{2,5}$) & 
	 $21$ & ($S_{2,7}$) & 
\\
\hline 
	  $3$ & 
	 $0$ & ($S_{1,4}$) & 
	 $2$ & ($S_{1,3}$) & 
	 $4$ & ($S_{1,5}$) & 
\\
\hline 
	  $4$ & 
	 $0$ & ($S_{1,6}$) & 
	 $3$ & ($S_{1,5}$) & 
	 $5$ & ($S_{1,7}$) & 
\\
\hline 
	  $5$ & 
	 $0$ & ($S_{1,8}$) & 
	 $4$ & ($S_{1,7}$) & 
	 $6$ & ($S_{1,9}$) & 
\\
\hline 
	  $6$ & 
	 $0$ & ($S_{1,10}$) & 
	 $5$ & ($S_{1,9}$) & 
	 $7$ & ($S_{1,11}$) & 
\\
\hline 
	  $7$ & 
	 $0$ & ($S_{1,12}$) & 
	 $8$ & ($S_{1,13}$) & 
	 $24$ & ($S_{3,22}$) & 
	 $6$ & ($S_{1,11}$) & 
	 $23$ & ($S_{3,11}$) & 
\\
\hline 
	  $8$ & 
	 $0$ & ($S_{1,14}$) & 
	 $9$ & ($S_{1,15}$) & 
	 $25$ & ($S_{3,1\equiv1,14}$) & 
	 $24$ & ($S_{3,23}$) & 
	 $7$ & ($S_{1,13}$) & 
\\
\hline 
	  $9$ & 
	 $0$ & ($S_{1,16}$) & 
	 $8$ & ($S_{1,15}$) & 
	 $10$ & ($S_{1,17}$) & 
\\
\hline 
	  $10$ & 
	 $0$ & ($S_{1,18}$) & 
	 $9$ & ($S_{1,17}$) & 
	 $11$ & ($S_{1,19}$) & 
\\
\hline 
	  $11$ & 
	 $0$ & ($S_{1,20}$) & 
	 $10$ & ($S_{1,19}$) & 
	 $12$ & ($S_{1,21}$) & 
\\
\hline 
	  $12$ & 
	 $0$ & ($S_{1,22}$) & 
	 $1$ & ($S_{1,23}$) & 
	 $11$ & ($S_{1,21}$) & 
\\
\hline 
	  $13$ & 
	 $14$ & ($S_{2,1\equiv3,14}$) & 
	 $16$ & ($S_{2,11}$) & 
	 $17$ & ($S_{2,22}$) & 
	 $18$ & ($S_{2,0\equiv3,12}$) & 
	 $22$ & ($S_{3,5}$) & 
\\ &
	 $26$ & ($S_{3,4}$) & 
\\
\hline 
	  $14$ & 
	 $13$ & ($S_{2,1\equiv3,14}$) & 
	 $15$ & ($S_{2,3}$) & 
	 $18$ & ($S_{2,0\equiv3,12}$) & 
	 $19$ & ($S_{2,2\equiv3,13}$) & 
	 $26$ & ($S_{3,5}$) & 
\\ &
	 $27$ & ($S_{3,7}$) & 
\\
\hline 
	  $15$ & 
	 $1$ & ($S_{2,5}$) & 
	 $19$ & ($S_{2,2\equiv3,13}$) & 
	 $20$ & ($S_{2,4}$) & 
	 $14$ & ($S_{2,3}$) & 
\\
\hline 
	  $16$ & 
	 $0$ & ($S_{2,9}$) & 
	 $17$ & ($S_{2,10}$) & 
	 $13$ & ($S_{2,11}$) & 
	 $21$ & ($S_{2,8}$) & 
\\
\hline 
	  $17$ & 
	 $16$ & ($S_{2,10}$) & 
	 $18$ & ($S_{2,23}$) & 
	 $13$ & ($S_{2,22}$) & 
	 $21$ & ($S_{2,9}$) & 
\\
\hline 
	  $18$ & 
	 $13$ & ($S_{2,0\equiv3,12}$) & 
	 $14$ & ($S_{2,0\equiv3,12}$) & 
	 $17$ & ($S_{2,23}$) & 
	 $19$ & ($S_{2,1\equiv3,14}$) & 
	 $23$ & ($S_{3,9}$) & 
\\ &
	 $27$ & ($S_{3,8}$) & 
\\
\hline 
	  $19$ & 
	 $18$ & ($S_{2,1\equiv3,14}$) & 
	 $20$ & ($S_{2,3}$) & 
	 $14$ & ($S_{2,2\equiv3,13}$) & 
	 $15$ & ($S_{2,2\equiv3,13}$) & 
\\
\hline 
	  $20$ & 
	 $1$ & ($S_{2,4}$) & 
	 $2$ & ($S_{2,5}$) & 
	 $19$ & ($S_{2,3}$) & 
	 $15$ & ($S_{2,4}$) & 
\\
\hline 
	  $21$ & 
	 $0$ & ($S_{2,8}$) & 
	 $16$ & ($S_{2,8}$) & 
	 $2$ & ($S_{2,7}$) & 
	 $17$ & ($S_{2,9}$) & 
\\
\hline 
	  $22$ & 
	 $0$ & ($S_{3,3}$) & 
	 $25$ & ($S_{3,2\equiv1,13}$) & 
	 $26$ & ($S_{3,4}$) & 
	 $13$ & ($S_{3,5}$) & 
\\
\hline 
	  $23$ & 
	 $24$ & ($S_{3,10}$) & 
	 $18$ & ($S_{3,9}$) & 
	 $27$ & ($S_{3,8}$) & 
	 $7$ & ($S_{3,11}$) & 
\\
\hline 
	  $24$ & 
	 $8$ & ($S_{3,23}$) & 
	 $27$ & ($S_{3,9}$) & 
	 $23$ & ($S_{3,10}$) & 
	 $7$ & ($S_{3,22}$) & 
\\
\hline 
	  $25$ & 
	 $0$ & ($S_{3,2\equiv1,13}$) & 
	 $8$ & ($S_{3,1\equiv1,14}$) & 
	 $26$ & ($S_{3,3}$) & 
	 $22$ & ($S_{3,2\equiv1,13}$) & 
\\
\hline 
	  $26$ & 
	 $25$ & ($S_{3,3}$) & 
	 $22$ & ($S_{3,4}$) & 
	 $13$ & ($S_{3,4}$) & 
	 $14$ & ($S_{3,5}$) & 
\\
\hline 
	  $27$ & 
	 $24$ & ($S_{3,9}$) & 
	 $18$ & ($S_{3,8}$) & 
	 $14$ & ($S_{3,7}$) & 
	 $23$ & ($S_{3,8}$) & 
\\

 \end{tabular}

\caption{\label{tbl:1wheel}The edge-labeled preimage of a clause with one wheel}
\end{table}

\begin{table}[p]
\small
\centering

\begin{tabular}[h]{c|rlrlrlrlrlr}
	Vertex & \multicolumn{11}{c}{Adjacencies (Labels)}\\
\hline 
	  $0$ & 
	 $1$ & ($S_{1,0\equiv2,12}$) & 
	 $2$ & ($S_{1,2\equiv2,13}$) & 
	 $3$ & ($S_{1,4}$) & 
	 $4$ & ($S_{1,6}$) & 
	 $5$ & ($S_{1,8}$) & 
\\ &
	 $6$ & ($S_{1,10}$) & 
	 $7$ & ($S_{1,12}$) & 
	 $8$ & ($S_{1,14}$) & 
	 $9$ & ($S_{1,16}$) & 
	 $10$ & ($S_{1,18}$) & 
\\ &
	 $11$ & ($S_{1,20}$) & 
	 $12$ & ($S_{1,22}$) & 
	 $19$ & ($S_{2,15}$) & 
	 $23$ & ($S_{3,3}$) & 
	 $26$ & ($S_{3,2\equiv1,13}$) & 
\\
\hline 
	  $1$ & 
	 $0$ & ($S_{1,0\equiv2,12}$) & 
	 $2$ & ($S_{1,1\equiv2,14}$) & 
	 $12$ & ($S_{1,23}$) & 
	 $18$ & ($S_{2,11}$) & 
\\
\hline 
	  $2$ & 
	 $0$ & ($S_{1,2\equiv2,13}$) & 
	 $1$ & ($S_{1,1\equiv2,14}$) & 
	 $3$ & ($S_{1,3}$) & 
	 $13$ & ($S_{2,0\equiv3,12}$) & 
	 $14$ & ($S_{2,2\equiv3,13}$) & 
\\ &
	 $15$ & ($S_{2,4}$) & 
	 $16$ & ($S_{2,6}$) & 
	 $17$ & ($S_{2,8}$) & 
	 $18$ & ($S_{2,10}$) & 
	 $19$ & ($S_{2,16}$) & 
\\ &
	 $20$ & ($S_{2,18}$) & 
	 $21$ & ($S_{2,20}$) & 
	 $22$ & ($S_{2,22}$) & 
	 $24$ & ($S_{3,9}$) & 
	 $28$ & ($S_{3,8}$) & 
\\
\hline 
	  $3$ & 
	 $0$ & ($S_{1,4}$) & 
	 $2$ & ($S_{1,3}$) & 
	 $4$ & ($S_{1,5}$) & 
\\
\hline 
	  $4$ & 
	 $0$ & ($S_{1,6}$) & 
	 $3$ & ($S_{1,5}$) & 
	 $5$ & ($S_{1,7}$) & 
\\
\hline 
	  $5$ & 
	 $0$ & ($S_{1,8}$) & 
	 $4$ & ($S_{1,7}$) & 
	 $6$ & ($S_{1,9}$) & 
\\
\hline 
	  $6$ & 
	 $0$ & ($S_{1,10}$) & 
	 $5$ & ($S_{1,9}$) & 
	 $7$ & ($S_{1,11}$) & 
\\
\hline 
	  $7$ & 
	 $0$ & ($S_{1,12}$) & 
	 $8$ & ($S_{1,13}$) & 
	 $25$ & ($S_{3,22}$) & 
	 $6$ & ($S_{1,11}$) & 
	 $24$ & ($S_{3,11}$) & 
\\
\hline 
	  $8$ & 
	 $0$ & ($S_{1,14}$) & 
	 $9$ & ($S_{1,15}$) & 
	 $26$ & ($S_{3,1\equiv1,14}$) & 
	 $25$ & ($S_{3,23}$) & 
	 $7$ & ($S_{1,13}$) & 
\\
\hline 
	  $9$ & 
	 $0$ & ($S_{1,16}$) & 
	 $8$ & ($S_{1,15}$) & 
	 $10$ & ($S_{1,17}$) & 
\\
\hline 
	  $10$ & 
	 $0$ & ($S_{1,18}$) & 
	 $9$ & ($S_{1,17}$) & 
	 $11$ & ($S_{1,19}$) & 
\\
\hline 
	  $11$ & 
	 $0$ & ($S_{1,20}$) & 
	 $10$ & ($S_{1,19}$) & 
	 $12$ & ($S_{1,21}$) & 
\\
\hline 
	  $12$ & 
	 $0$ & ($S_{1,22}$) & 
	 $1$ & ($S_{1,23}$) & 
	 $11$ & ($S_{1,21}$) & 
\\
\hline 
	  $13$ & 
	 $2$ & ($S_{2,0\equiv3,12}$) & 
	 $27$ & ($S_{3,4}$) & 
	 $22$ & ($S_{2,23}$) & 
	 $14$ & ($S_{2,1\equiv3,14}$) & 
	 $23$ & ($S_{3,5}$) & 
\\
\hline 
	  $14$ & 
	 $2$ & ($S_{2,2\equiv3,13}$) & 
	 $27$ & ($S_{3,5}$) & 
	 $28$ & ($S_{3,7}$) & 
	 $13$ & ($S_{2,1\equiv3,14}$) & 
	 $15$ & ($S_{2,3}$) & 
\\
\hline 
	  $15$ & 
	 $16$ & ($S_{2,5}$) & 
	 $2$ & ($S_{2,4}$) & 
	 $14$ & ($S_{2,3}$) & 
\\
\hline 
	  $16$ & 
	 $17$ & ($S_{2,7}$) & 
	 $2$ & ($S_{2,6}$) & 
	 $15$ & ($S_{2,5}$) & 
\\
\hline 
	  $17$ & 
	 $16$ & ($S_{2,7}$) & 
	 $2$ & ($S_{2,8}$) & 
	 $18$ & ($S_{2,9}$) & 
\\
\hline 
	  $18$ & 
	 $1$ & ($S_{2,11}$) & 
	 $2$ & ($S_{2,10}$) & 
	 $17$ & ($S_{2,9}$) & 
\\
\hline 
	  $19$ & 
	 $0$ & ($S_{2,15}$) & 
	 $2$ & ($S_{2,16}$) & 
	 $20$ & ($S_{2,17}$) & 
\\
\hline 
	  $20$ & 
	 $2$ & ($S_{2,18}$) & 
	 $19$ & ($S_{2,17}$) & 
	 $21$ & ($S_{2,19}$) & 
\\
\hline 
	  $21$ & 
	 $2$ & ($S_{2,20}$) & 
	 $20$ & ($S_{2,19}$) & 
	 $22$ & ($S_{2,21}$) & 
\\
\hline 
	  $22$ & 
	 $2$ & ($S_{2,22}$) & 
	 $21$ & ($S_{2,21}$) & 
	 $13$ & ($S_{2,23}$) & 
\\
\hline 
	  $23$ & 
	 $0$ & ($S_{3,3}$) & 
	 $26$ & ($S_{3,2\equiv1,13}$) & 
	 $27$ & ($S_{3,4}$) & 
	 $13$ & ($S_{3,5}$) & 
\\
\hline 
	  $24$ & 
	 $25$ & ($S_{3,10}$) & 
	 $2$ & ($S_{3,9}$) & 
	 $28$ & ($S_{3,8}$) & 
	 $7$ & ($S_{3,11}$) & 
\\
\hline 
	  $25$ & 
	 $8$ & ($S_{3,23}$) & 
	 $24$ & ($S_{3,10}$) & 
	 $28$ & ($S_{3,9}$) & 
	 $7$ & ($S_{3,22}$) & 
\\
\hline 
	  $26$ & 
	 $0$ & ($S_{3,2\equiv1,13}$) & 
	 $8$ & ($S_{3,1\equiv1,14}$) & 
	 $27$ & ($S_{3,3}$) & 
	 $23$ & ($S_{3,2\equiv1,13}$) & 
\\
\hline 
	  $27$ & 
	 $26$ & ($S_{3,3}$) & 
	 $13$ & ($S_{3,4}$) & 
	 $14$ & ($S_{3,5}$) & 
	 $23$ & ($S_{3,4}$) & 
\\
\hline 
	  $28$ & 
	 $24$ & ($S_{3,8}$) & 
	 $25$ & ($S_{3,9}$) & 
	 $2$ & ($S_{3,8}$) & 
	 $14$ & ($S_{3,7}$) & 
\\

 \end{tabular}

\caption{\label{tbl:2wheels}The edge-labeled preimage of a clause with two wheels}
\end{table}

\end{document}